\DeclareMathOperator*{\argmin}{arg\,min}
\begin{document}

\title*{Research Report: Exact biconvex reformulation of the $\ell_2-\ell_0$ minimization problem}
\author{Arne Bechensteen, Laure Blanc-F\'{e}raud, and Gilles Aubert
}
\institute{Arne Bechensteen \at Universit\'{e} C\^{o}te d'Azur, CNRS, INRIA, Laboratoire I3S UMR 7271, 06903 Sophia Antipolis, France, \email{arne-henrik.bechensteen@inria.fr }
\and Laure Blanc-F\'{e}raud \at Universit\'{e} C\^{o}te d'Azur, CNRS, INRIA, Laboratoire I3S UMR 7271, 06903 Sophia Antipolis, France, \email{blancf@i3s.unice.fr}
\and
Gilles Aubert \at Universit\'{e} C\^{o}te d'Azur, UNS, Laboratoire J. A. Dieudonn\'{e} UMR 7351, 06100 Nice, France,$\,\,$ $\,\,$ $\,\,$   \email{gaubert@unice.fr}}
%
%
\maketitle
\abstract{We focus on the minimization of the least square loss function either under a $k$-sparse constraint or with a sparse penalty term. Based on recent results, we reformulate the $\ell_0$ pseudo-norm exactly as a convex minimization problem by introducing an auxiliary variable. We then propose an exact biconvex reformulation of the $\ell_2-\ell_0$ constrained and penalized problems. We give correspondence results between minimizers of the initial function and the reformulated ones.  The  reformulation is biconvex and the non-convexity is due to a penalty term. These two properties  are used to derive a minimization algorithm. We apply the algorithm to the problem of single-molecule localization microscopy and compare the results with the well-known Iterative Hard Thresholding algorithm. Visually and numerically the biconvex reformulations perform better.} 

\section{Introduction}
\label{sec:1}
Sparse optimization consists in finding a solution  with many zero components from an underdetermined problem. There are many problems where the solution has many zero components (e.g machine learning, variable selection, pulse deconvolution, etc).  The most common way to measure the sparsity of a solution is by using the counting function $\|\cdot\|_0$ which is, by abuse of terminology, referred to as the $\ell_0$-norm, and is defined as

$$\|x\|_0=\#\{x_i , i=1,\cdots N : x_i\neq 0\}.$$

In this paper, we are interested in linear problems where the observation $d\in \mathbb{R}^M$ can be described as the multiplication of the solution $x\in \mathbb{R}^N$  with a matrix $A\in \mathbb{R}^{M\times N}$ plus some noise $\eta$ which we assume to be additive white Gaussian and independent of the data. 
$$
Ax+\eta=d
$$
This problem is underdetermined when $M<N$.
In sparse optimization involving the square norm, there are  three different approaches to tackle the problem. We search for $\hat{x}$ solution of:  
\begin{align}
    &\hat{x} \in \argmin_x G_{\ell_0}(x):=\frac{1}{2}\|Ax-d\|^2_2+\lambda \|x\|_0 \tag{P} \label{eq:sparspenalty}\\
   &\hat{x} \in \argmin_{x}G_{k}(x):=\frac{1}{2}\|Ax-d\|^2_2\text{ s.t. } \|x\|_0\leq k \tag{C} \label{eq:sparseconst}\\
    &\hat{x} \in \argmin_{x} \|x\|_0  \text{ s.t. } \frac{1}{2}\|Ax-d\|^2_2\leq \varepsilon \label{eq:sparsnouse}
\end{align} 
The two cases (\ref{eq:sparseconst}) and (\ref{eq:sparsnouse})  are on constrained form. For problem (\ref{eq:sparseconst}), the user has a knowledge of the sparsity of the solution, which is in this case at maximum $k$. In the case of (\ref{eq:sparsnouse}), the user have prior knowledge about the amount of noise, $\eta$, the signal $d$ has been affected by. It is usually possible to estimate $\epsilon$ from the data and the statistics of $\eta$. 

In the case of problem (\ref{eq:sparspenalty}), which is also referred to as the penalized $\ell_0$ form, the user does not have any information on the sparsity of the solution, nor on the noise the signal $d$ has been affected by. Therefore, the user must choose the amplitude of $\lambda\in \mathbb{R}_+$ which serves as a trade-off parameter between the data term and the sparsity term. If $\lambda$ is large, the reconstruction of $x$ will be sparse, but the difference between $Ax$ and $d$ may be large. Conversely, if $\lambda$ is small, the error between $Ax$ and $d$ is small, but the reconstructed $x$ may not be sparse. 

The above problems are not continuous, nor convex and the problems are known to be NP-hard due to the combinatorial nature of the $\ell_0$ -norm. However, they have been greatly studied due to their countless applications such as  sparse reconstruction of signals, variable selection, and single-molecule localization microscopy to cite a few. There are two main approaches to solve the problems, which are greedy algorithms and relaxations. A new approach has been lately been introduced which is a mathematical program with equilibrium constraint.

\textbf{Greedy algorithms}
Greedy algorithms are often used in sparse optimization. The idea behind these algorithms is to start with a zero initialization and for each iteration add one component to the signal $x$ until the wished sparsity is obtained. One of the easiest and least costly greedy algorithms, the Matching Pursuit (MP) algorithm \cite{mallat_matching_1993}, adds the component that reduces the residual $R$ at each iteration $s$, which is defined as $R=d-Ax^s$. 

The Orthogonal Matching Pursuit (OMP), proposed in \cite{pati_orthogonal_1993}, is a more refined version of MP. The algorithm chooses each component in the same way as MP, but for each new component added, it calculates and update the value of all the previous components. This may lead to a better result, but the complexity and cost of the calculation are greater than MP. 

Greedy algorithms have been greatly studied and a lot of different versions of the above algorithms has been developed. More complex ones, as the algorithm Greedy sparse simplex \cite{beck_sparsity_2012} or Single Best Replacement (SBR) \cite{soussen2011bernoulli} have been introduced. In contrast to MP or OMP, the algorithms can add and also substract components.

\textbf{Relaxations}
The three formulations of the sparse optimization problem (\ref{eq:sparspenalty}, \ref{eq:sparseconst} and \ref{eq:sparsnouse}) are non-convex, due to the non-convexity of the  $\ell_0$-norm. A common alternative is to work with the convex  $\ell_1$-norm instead of non-convex $\ell_0$-norm. The $\ell_2-\ell_0$ problem becomes a $\ell_2-\ell_1$ problem. This is called a convex relaxation since the non-convex term is replaced by a convex term. However, only under certain assumptions, the original problem and the convex relaxed problem have the same solutions \cite{candes_robust_2006}. Furthermore, $\|x\|_0$ and $\|x\|_1$ are very different  when $x$ contains large values. Non-smooth, non-convex but continuous relaxations where primarily introduced to avoid the difference between $\|x\|_0$ and $\|x\|_1$ when $x$ contains large values.  These relaxations are still non-convex, and the convergence of the algorithms to a global minimum is not assured. There are many non-convex continuous relaxations, as the NonNegative Garrote \cite{breiman_better_1995}, the Log-Sum penalty \cite{candes_enhancing_2007} or Capped-$\ell_1$ \cite{peleg_bilinear_2008} to mention some. The continuous Exact $\ell_0$ penalty introduced in \cite{soubies2015continuous} proposes an exact relaxation for the problem (\ref{eq:sparspenalty}) and a unified view of these functions is given in \cite{soubies2017unified}.

\textbf{Mathematical program with equilibrium constraint}
A more recent method resolving a sparse optimization problem is to introduce auxiliary variables to simulate the nature of $\ell_0$-norm and add a constraint between the primary variable and the auxiliary.  Hence the problem becomes a mathematical program with equilibrium constraint, and among the approaches, we find Mixed Integer reformulations \cite{bourguignon2016exact}, Boolean relaxation \cite{pilanci2015sparse} and the article that inspired our work, \cite{yuan_sparsity_2016}. The method has been used to study the three formulations of the sparse optimization problem (see \cite{bi2014exact, lu2013sparse}, for example).

\textbf{Contribution}: 
The aim of this paper is to present and study a new method for optimizing the constrained (\ref{eq:sparseconst}) and penalized (\ref{eq:sparspenalty}) problem with an added positivity constraint. The added positivity constraint is important in many sparse optimization problems. We start in section 2 by introducing a reformulation of the $\ell_0$ -norm by a variational characterization. The norm is rewritten as a convex minimization problem by introducing an auxiliary variable, and we can reformulate (\ref{eq:sparseconst}) and (\ref{eq:sparspenalty}) as a mathematical program with equilibrium constraint (MPEC). The reformulation of the $\ell_0$ -norm was presented in \cite{yuan_sparsity_2016}, and our work is an extension of their work, as they only study the minimization of a data term which is  Lipschitz continuous with a sparsity constraint. In this paper, the data term is the square norm,on the error $Ax-d$ which is not Lipschitz continuous, and we study the minimization with a sparsity constraint (problem (\ref{eq:sparseconst})) and with a sparse penalty term (problem (\ref{eq:sparspenalty})).  Based on the MPEC formulation of the problem we define a Lagrangian cost function $G_\rho$. The  function $G_\rho : \mathbb{R}^N\times\mathbb{R}^N\rightarrow \mathbb{R}$ is biconvex. The main contribution of the paper is Theorem \ref{theo:vikC} for the constrained version of $G_\rho$ and Theorem \ref{theo:vikP} for the penalized version of $G_\rho$.  We show that minimizing the $G_\rho$ is equivalent, in the sense of minimizers, as to find a solution to the initial constrained or penalized problem. In section 3 we propose an algorithm to minimize the new objective function. This algorithm is easy to implement as it is based on already existing and well known algorithms.  In section 4 we test the algorithms on the problem of single-molecule localization microscopy (SMLM). This is a well-studied problem \cite{sage2015quantitative} where the goal is to localize the molecules with a high precision.   

\textbf{Notations}:
\begin{itemize}
\item $\|\cdot\|=\|\cdot\|_2$, the $\ell_2$-norm. If another norm is applied, this will be denoted with a subscript.
\item The function
$$
\|x\|_0=\#\{x_i , i=1,\cdots N : x_i\neq 0\}
$$
will be, by abuse of terminology, referred to as the  $\ell_0$-norm.
\item The observed signal  $d\in \mathbb{R}^M$.
\item A is a matrix in $\mathbb{R}^{M \times N}$, $M<N$.
\item $A^T$ is the transposed matrix of $A$.
\item For a matrix $A\in \mathbb{R} ^ {M \times N}$, the singular value decomposition (SVD) of $A$ is noted $A=U_A\Sigma(A)V_A^*$.
\item For a matrix $A\in \mathbb{R} ^ {M \times N}$, we denote $\|A\|$ the spectral norm of A defined as
$$
\|A\|= \sigma(A)
$$
where $\sigma(A)$ is the largest singular value of $A$
\item If not stated otherwise, the vector $x\in \mathbb{R}^N$.

\item The indicator function $\iota_{x\in X}$ is defined for $X\subset \mathbb{R}^N$ as 
$$
\iota_{x\in X}(x)=\begin{cases} +\infty \text{ if } x\notin X\\
0 \text{ if } x\in X
\end{cases}
$$

\item The subgradient of the convex function $f$ at point $x$ is the set of vectors $v$ such that 
$$
\forall z\in\,dom(f)\quad f(z)\geq f(x)+v^T(z-x)
$$

\item The normal cone $N_C(x_0)$ of a convex set $C$ in $x_0 \in C$ is defined as
$$N_C(x_0)=\{\eta \in \mathbb{R}^n,<\eta,x-x_0>\leq 0\quad \forall x \in C\}.$$ 

\item  $-{\bf{1}}\leq u \leq {\bf{1}}$ is a component-wise notation, i.e, $\forall \, \, i, \, -1\leq u_i\leq 1$. 
\item $|x|\in \mathbb{R}^N$ is a vector containing the absolute value of each component of the vector $x$.
\end{itemize}

\section{Exact reformulation}
\label{sec:2}

In this section we focus on a reformulation of the $\ell_0$-norm. This reformulation was first introduced in \cite{yuan_sparsity_2016}. The $\ell_0$-norm can be rewritten as a convex minimization problem by introducing an auxiliary variable. 
\begin{lemma}
{\cite[Lemma 1]{yuan_sparsity_2016}}
For any $x\in \mathbb{R}^N$
\begin{equation}
    \|x\|_0 = \min_{-{\bf{1}}\leq u \leq {\bf{1}}} \|u\|_1 \text{ s.t } \|x\|_1= <u,x>
    \label{eq:norm0}
\end{equation}

\end{lemma}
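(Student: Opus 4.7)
The plan is a direct analysis of the feasible set defined by the constraint $\langle u,x\rangle = \|x\|_1$ together with $-\mathbf{1}\le u\le\mathbf{1}$. The key observation is that these two conditions together pin down $u_i$ completely on the support of $x$ while leaving it free on the zero set of $x$, and then minimizing $\|u\|_1$ is trivial.

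First I would bound the inner product componentwise: for any $u$ with $-1\le u_i\le 1$,
\[
\langle u,x\rangle = \sum_{i=1}^N u_i x_i \le \sum_{i=1}^N |u_i|\,|x_i| \le \sum_{i=1}^N |x_i| = \|x\|_1 .
\]
The constraint $\langle u,x\rangle = \|x\|_1$ therefore forces equality in both inequalities. Equality in the second inequality forces $|u_i|=1$ whenever $x_i\neq 0$, and equality in the first then forces $u_i x_i = |x_i|$, i.e.\ $u_i = \operatorname{sign}(x_i)$ on the support of $x$. On the complement $\{i : x_i = 0\}$ the constraint puts no restriction on $u_i$ other than $|u_i|\le 1$.

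Next I would split $\|u\|_1$ according to the support of $x$. Setting $S = \{i : x_i\neq 0\}$, every feasible $u$ satisfies
\[
\|u\|_1 = \sum_{i\in S} |u_i| + \sum_{i\notin S} |u_i| = |S| + \sum_{i\notin S} |u_i| \ge |S| = \|x\|_0 ,
\]
so the minimum is bounded below by $\|x\|_0$. The bound is attained by the explicit choice $u_i = \operatorname{sign}(x_i)$ for $i\in S$ and $u_i = 0$ for $i\notin S$, which is feasible and yields $\|u\|_1 = \|x\|_0$. This gives the identity.

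There is no real obstacle here: the whole argument is driven by the case of equality in $\langle u,x\rangle\le\|u\|_\infty\|x\|_1$, and the rest is a one-line componentwise minimization. The only point worth stating carefully is the componentwise equality case, since it is what converts the scalar constraint $\langle u,x\rangle = \|x\|_1$ into the pointwise information $u_i=\operatorname{sign}(x_i)$ on the support of $x$.
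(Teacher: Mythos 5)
Your proof is correct and follows essentially the same route as the paper's: both arguments hinge on the componentwise non-negativity $|x_i|-u_ix_i\ge 0$ (equivalently, equality in $\langle u,x\rangle\le\sum_i|u_i||x_i|\le\|x\|_1$) to convert the scalar constraint into the pointwise condition $u_i=\operatorname{sign}(x_i)$ on the support of $x$, and then set $u_i=0$ off the support to minimize $\|u\|_1$. If anything, your write-up makes the equality-case analysis slightly more explicit than the paper, which states the componentwise constraint first and only afterwards notes its equivalence to the aggregated one.
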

\begin{proof}
We consider first the problem
\begin{equation}
    \min_{-{\bf{1}}\leq u \leq {\bf{1}}} \|u\|_1 \textit{ s.t. } |x_i|=u_ix_i \,\,\,\,  \forall i
\end{equation}
The equality constraint $|x_i|=u_ix_i$ and $-1\leq u_i\leq 1$ yields 
\begin{equation}
   \hat{u}_i\begin{cases}
    =1 \text{ iff } x_i>0\\
    =-1 \text{ iff } x_i<0\\
    \in [-1,1] \text{ iff } x_i=0
    \end{cases}
\end{equation}
As we minimize $\|u\|_1$, if $x_i=0$ then $\hat{u}_i=0$. We have that $\|\hat{u}\|_1=\|x\|_0$. Furthermore, since $u\in [-1,1]$, we have $|x_i|-u_ix_i\geq 0\, \forall i$. So the constraint $|x_i|=x_i u_i \,\forall \, i$ is equivalent to $\sum_i |x_i|=\sum_i x_iu_i$ which is exactly  $\|x\|_1=<x,u>$.
\qed \end{proof}

The introduction of the auxiliary variable increases the dimension of the problem, but the non-convex and non-continuous $\ell_0$-norm can now be written as a {\textit{convex}}  minimization problem. In this paper, we study the $\ell_2-\ell_0$ penalized and constrained problems using the reformulation of the $\ell_0$-norm.  We also add non-negativity constraint to the $x$ variable as it is usually used as a priori in imaging problems. The two problems can be written as a general problem  defined as:

\begin{equation}
    \min_{x,u}  \frac{1}{2}\|Ax-d\|^2+ I(u) +\iota_{\cdot\geq 0}(x) \text{ s.t. } \|x\|_1=<x,u>  \label{eq:Gammel}
\end{equation}
where $I(u)$ is in the case of the constrained problem (\ref{eq:sparseconst}):
\begin{equation}  \label{eq:a}
I(u)=\begin{cases}
0 \text{ if }\|u\|_1 \leq k \text{ and } \forall \, \, i, \, -1\leq u_i\leq 1\\
\infty \text{ otherwise }
\end{cases} 
\end{equation}

and for the penalized problem (\ref{eq:sparspenalty}):
\begin{equation} \label{eq:b}
  I(u)=\begin{cases}
\lambda \|u\|_1 \text{ if } \forall \, \, i, \, -1\leq u_i\leq 1\\
\infty \text{ otherwise }
\end{cases}  
\end{equation}
We note the $\mathcal{S}= \{(x,u); \|x\|_1=<x,u>\}$, and we define the functional $G$ as
\begin{equation}
   G(x,u)=\frac{1}{2}\|Ax-d\|^2+ I(u)+\iota_{\cdot\geq 0}(x)+\iota_{\cdot \in \mathcal{S}}(x,u) \label{eq:G}
\end{equation}

The functional (\ref{eq:G}) is still  non-convex  due to the equality constraint, but it is biconvex: the minimization of (\ref{eq:G}) with respect to $x$ while $u$ is fixed is convex, and conversely.  However, the minimization of such a function is hard because of the equality constraint which is non-convex. We can relax this constraint by introducing a penalty term, $\rho(\|x\|_1-<x,u>)$. This is based on the method of Lagrange Multipliers. Note that it is not necessary to add the absolute value  to this penalty term since $\forall\, \, i\,,|u_i|\leq 1$  and therefore the penalty term is never negative.   

We introduce a Lagrangian cost function  $G_\rho(x,u): \mathbb{R}^{N}\times\mathbb{R}^N \rightarrow \mathbb{R}$  defined as
\begin{equation}
G_\rho (x,u) =  \frac{1}{2}\|Ax-d\|^2+I(u) +\iota_{\cdot\geq 0}(x) +\rho(\|x\|_1-<x,u>) \label{eq:Grho}
\end{equation}
In this paper we are focusing on exact penalty methods, such that a local or global minimizer of (\ref{eq:Grho}) leads to a local or global minimizer of the initial problem (\ref{eq:G}). The following theorem ensures this.  

\begin{theorem}[Constrained form]\label{theo:vikC}
Assume that $\rho>\sigma(A)\|d\|_2$, and $A$ is of full rank. Let $G_\rho$ and $G$ be defined respectively in (\ref{eq:Grho}) and (\ref{eq:G})  with the constrained form  $I(u)$ defined in (\ref{eq:a}). We have:
\begin{enumerate}
    \item If $(x_\rho,u_\rho)$ is a local or global minimizer of $G_\rho$, then $(x_\rho,u_\rho)$ is a local or global minimizer of $G$.
    \item If $(\hat{x},\hat{u})$ is a global minimizer of $G$, then $(\hat{x},\hat{u})$ is a global minimizer of $G_\rho$.
\end{enumerate}
\end{theorem}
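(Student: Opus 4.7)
The strategy is to reduce the theorem to one central claim: every (local or global) minimizer $(x_\rho,u_\rho)$ of $G_\rho$ automatically satisfies $(x_\rho,u_\rho) \in \mathcal{S}$. Once this is granted, both parts follow quickly from the pointwise inequality $G_\rho \leq G$ together with the identity $G_\rho = G$ on $\mathcal{S}$. Indeed, on $\{x \geq 0,\ -\mathbf{1} \leq u \leq \mathbf{1}\}$ the penalty $\rho\sum_i x_i(1-u_i)$ is non-negative while $\iota_{\mathcal{S}}$ takes only the values $0$ or $+\infty$, so $G_\rho \leq G$ everywhere with equality exactly on $\mathcal{S}$. Part~1 then follows from the chain $G(x_\rho,u_\rho) = G_\rho(x_\rho,u_\rho) \leq G_\rho(x,u) \leq G(x,u)$ on the relevant neighborhood, and part~2 from $G_\rho(\hat{x},\hat{u}) = G(\hat{x},\hat{u}) = \min G = \min G_\rho$, where the last equality uses the claim applied to any global minimizer of $G_\rho$.

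For the central claim I would exploit the biconvex structure: at a joint minimizer, $x_\rho$ solves the convex problem $\min_{x \geq 0} \tfrac{1}{2}\|Ax-d\|^2 + \rho\langle x,\mathbf{1}-u_\rho\rangle$, and $u_\rho$ solves the LP $\max_u \langle x_\rho, u\rangle$ over $\{\|u\|_1 \leq k\} \cap [-\mathbf{1},\mathbf{1}]^N$. The $x$-subgradient condition yields $[A^T(Ax_\rho - d)]_i = -\rho(1-u_{\rho,i})$ for every $i \in T := \mathrm{supp}(x_\rho)$, and the KKT conditions for the $u$-LP produce a multiplier $\mu \geq 0$ with $u_{\rho,i} = 1$ when $x_{\rho,i} > \mu$, $u_{\rho,i} = 0$ when $x_{\rho,i} < \mu$, and fractional values only at ties $x_{\rho,i} = \mu$ (the budget $\sum_i u_{\rho,i} = k$ binds whenever $\mu > 0$). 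A short case analysis on these conditions shows that $(x_\rho,u_\rho) \in \mathcal{S}$ is equivalent to $\|x_\rho\|_0 \leq k$, so the whole argument reduces to ruling out $\|x_\rho\|_0 > k$.

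This last step is where $\rho > \sigma(A)\|d\|_2$ and full rank of $A$ enter. For a global minimizer I would pick any $0/1$-valued vertex $u^*$ of the $u$-LP optimum set with $\{u^*=1\} \subseteq T$; such a vertex exists whenever $\|x_\rho\|_0 \geq k$ by selecting $k$ indices among the largest coordinates of $x_\rho$. Then $\langle x_\rho, u^*\rangle = \langle x_\rho, u_\rho\rangle$, so $G_\rho(x_\rho,u^*) = G_\rho(x_\rho,u_\rho)$, while the comparison $G_\rho(x_\rho,u_\rho) \leq G_\rho(0,0) = \tfrac{1}{2}\|d\|^2$ yields $\|A^T(Ax_\rho-d)\|_\infty \leq \sigma(A)\|d\|_2 < \rho$. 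Assuming $\|x_\rho\|_0 > k$, some $i \in T$ has $u^*_i = 0$; if $x_\rho$ were also an $x$-minimizer of the convex problem $G_\rho(\cdot,u^*)$, the subgradient condition on that coordinate would force $[A^T(Ax_\rho - d)]_i = -\rho$, contradicting the strict bound just obtained. Hence $x_\rho$ is not an $x$-minimizer for $u^*$, so the actual $x$-minimizer $x^*$ satisfies $G_\rho(x^*,u^*) < G_\rho(x_\rho,u^*) = G_\rho(x_\rho,u_\rho)$, violating global minimality and closing the argument.

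The main obstacle I anticipate is extending this exchange argument to a purely local minimizer, where the comparison with $(0,0)$ is no longer directly available and the vertex $u^*$ need not lie in a small neighborhood of $u_\rho$. The natural fix is to replace the global comparison by a small simultaneous perturbation that moves $u_\rho$ along an edge of the $u$-LP face toward a vertex $u^*$ with $\{u^*=1\} \subseteq T$ while pushing the excess positive coordinates of $x_\rho$ to zero; the strict inequality $\rho > \sigma(A)\|d\|_2$ together with full rank of $A$ should make the first-order directional derivative of $G_\rho$ along this perturbation strictly negative, contradicting local minimality unless $(x_\rho,u_\rho) \in \mathcal{S}$ to begin with.
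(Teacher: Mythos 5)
Your overall architecture coincides with the paper's: everything is reduced to the claim that a minimizer of $G_\rho$ satisfies $\|x_\rho\|_1=\langle x_\rho,u_\rho\rangle$, and for \emph{global} minimizers your argument is correct and is essentially the paper's (the bound $\|A^T(Ax_\rho-d)\|_\infty\le\sigma(A)\|d\|_2<\rho$ obtained by comparing with $x=0$ is Lemma~\ref{lem:bound2} combined with the first step of Lemma~\ref{lem:wis}, and the contradiction on a coordinate where $u$ vanishes but $x$ does not is the content of Lemma~\ref{lem:wis}). Your derivation of parts~1 and~2 from the central claim via the sandwich $G_\rho\le G$ with equality on $\mathcal{S}$ is also fine, and if anything slightly cleaner than the paper's phrasing.

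The genuine gap is the local case of part~1, which you flag but do not close, and your proposed fix does not work as stated. Two ingredients of your global argument fail locally: the comparison $G_\rho(x_\rho,u_\rho)\le G_\rho(0,0)$ is unavailable, and replacing $u_\rho$ by a $0/1$ vertex $u^*$ is a finite move in $u$, so it cannot contradict \emph{local} minimality. Your suggested repair via a first-order directional derivative along $(\delta x,u^*-u_\rho)$ with $\delta x$ killing the excess coordinates actually gives derivative zero: the term $-\rho\langle x_\rho,u^*-u_\rho\rangle$ vanishes because both $u$'s are LP-optimal, and on each excess coordinate the contribution $-x_{\rho,i}\bigl([A^T(Ax_\rho-d)]_i+\rho(1-u_{\rho,i})\bigr)$ vanishes by the very $x$-stationarity condition you invoke, so no descent direction is produced at first order. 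The paper's resolution (Lemma~\ref{lem:subPro}) sidesteps both obstacles: fix $u=u_\rho$, perturb only the block $x_\omega$ with $\omega=\{i:(u_\rho)_i=0\}$, and observe that $(x_\rho)_\omega$ is then a local minimizer of the \emph{convex} subproblem $\min_{x_\omega\ge0}\tfrac12\|A_\omega x_\omega-d\|^2+\rho\|x_\omega\|_1$, hence a global one; the comparison with $x_\omega=0$ is thereby restored and yields $\|A_\omega^T(A_\omega\hat x_\omega-d)\|_\infty\le\sigma(A_\omega)\|d\|_2\le\sigma(A)\|d\|_2<\rho$ (Lemmas~\ref{lem:normA} and~\ref{lem:wis}), forcing $(x_\rho)_\omega=0$; combined with the characterization of the $u$-subproblem (Lemma~\ref{lem:uconst}) this gives membership in $\mathcal{S}$ for local minimizers too. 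The local-to-global passage through convexity of the frozen-$u$ block is the ingredient you need to import to complete your proof.
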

Two lemmas are needed in order to proof Theorem \ref{theo:vikC}. The complete proofs of these lemmas require three other lemmas (Lemma \ref{lem:normA}, Lemma \ref{lem:wis}, and Lemma \ref{lem:uconst}) stated in the Appendix. 

\begin{lemma}\label{lem:subPro}
Let $\rho>\sigma(A)\|d\|_2$. Let $(x_{\rho},u_{\rho})$ be a local or global minimizer of $ G_\rho(x,u):=\frac{1}{2}\|Ax-d\|^2+ I(u) +\rho(\|x\|_1-<x,u>)$ with $I(u)$ defined as in (\ref{eq:a}) or (\ref{eq:b}). Let $\omega= \{i\in \{1,\dots,N\}; (u_{\rho})_i=0\}$. Then  $(x_{\rho})_i=0\, \forall i \in \omega$
\end{lemma}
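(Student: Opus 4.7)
The plan is to exploit the fact that, for $u$ fixed, $G_\rho(\cdot, u)$ is convex in $x$, so the global first-order optimality condition must hold at any local minimizer. Applied at coordinates $i \in \omega$, this condition will force $(x_\rho)_i = 0$ once $\rho$ is large enough, and the threshold $\rho > \sigma(A)\|d\|_2$ is exactly what is needed.

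First I would observe that since $(x_\rho, u_\rho)$ is a local minimizer of $G_\rho$ over $\mathbb{R}^N \times \mathbb{R}^N$, the partial map $x \mapsto G_\rho(x, u_\rho)$ admits a local minimum at $x_\rho$. This partial function is the sum of $\tfrac{1}{2}\|Ax-d\|^2$, the convex term $\rho\|x\|_1$, and the linear term $-\rho\langle x, u_\rho\rangle$ (the indicator $I(u_\rho)$ being constant in $x$), hence it is convex; a local minimizer of a convex function is a global minimizer, and Fermat's rule yields
\begin{equation*}
0 \in A^T(Ax_\rho - d) + \rho\, \partial\|x_\rho\|_1 - \rho\, u_\rho .
\end{equation*}
Reading this coordinate-wise at an index $i \in \omega$, where $(u_\rho)_i = 0$, gives $-[A^T(Ax_\rho - d)]_i \in \rho\, \partial |(x_\rho)_i|$. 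If, for contradiction, $(x_\rho)_i \neq 0$, then $\partial |(x_\rho)_i| = \{\mathrm{sign}((x_\rho)_i)\}$ and therefore $|[A^T(Ax_\rho - d)]_i| = \rho$.

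The crux is then to show this equality is impossible under the assumption $\rho > \sigma(A)\|d\|_2$. The key intermediate estimate is $\|Ax_\rho - d\|_2 \leq \|d\|_2$, which I would invoke from Lemma \ref{lem:normA} in the appendix. Granted this, I chain the standard inequalities
\begin{equation*}
|[A^T(Ax_\rho - d)]_i| \;\leq\; \|A^T(Ax_\rho - d)\|_\infty \;\leq\; \|A^T(Ax_\rho - d)\|_2 \;\leq\; \sigma(A)\,\|Ax_\rho - d\|_2 \;\leq\; \sigma(A)\,\|d\|_2 \;<\; \rho,
\end{equation*}
contradicting $|[A^T(Ax_\rho - d)]_i| = \rho$. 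Hence $(x_\rho)_i = 0$ for every $i \in \omega$. Note the argument is insensitive to whether $I$ is given by (\ref{eq:a}) or (\ref{eq:b}), since in both cases $I(u_\rho)$ is just an additive constant when $x$ varies.

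The main obstacle is the data-fit bound $\|Ax_\rho - d\|_2 \leq \|d\|_2$ at a purely \emph{local} minimizer. For a global minimizer it is immediate by comparison with the admissible point $(0,0)$, at which $G_\rho(0,0) = \tfrac{1}{2}\|d\|^2$ in both variants of $I$; the localization to an arbitrary local minimum is precisely what Lemma \ref{lem:normA} is designed to handle (presumably by exhibiting a convex path or scaling in $x$ along which $G_\rho$ cannot increase past $\tfrac{1}{2}\|d\|^2$ without contradicting local optimality). Everything else is routine subdifferential calculus plus the elementary bounds $\|\cdot\|_\infty \leq \|\cdot\|_2$ and $\|A^T\|_{\mathrm{op}} = \sigma(A)$.
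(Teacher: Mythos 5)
Your argument is correct, and it takes a more direct route than the paper. The paper's proof fixes $u=u_\rho$ and the coordinates $(x_\rho)_J$, decomposes the data term so that $(x_\rho)_\omega$ appears as a minimizer of a reduced problem in the variables $x_\omega$ alone, and then invokes Lemma \ref{lem:normA} (to pass from $\sigma(A)$ to $\sigma(A_\omega)$) together with Lemma \ref{lem:wis} (whose proof contains exactly the subdifferential computation and the chain of inequalities you write). You instead apply Fermat's rule to the full partial map $x\mapsto G_\rho(x,u_\rho)$ and read off the optimality condition at the coordinates $i\in\omega$, which bypasses both the coordinate decomposition and the submatrix lemma. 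The core mechanism is the same --- the bound $\|Ax_\rho-d\|\leq\|d\|$ plus $\|A^T(Ax_\rho-d)\|_\infty\leq\sigma(A)\|Ax_\rho-d\|$ forces the $\ell_1$-subgradient condition $|[A^T(Ax_\rho-d)]_i|=\rho$ to fail --- but your organization is cleaner and avoids having to justify that the restricted subproblem really has the form required by Lemma \ref{lem:wis}. One caveat: the statement of the lemma omits $\iota_{\cdot\geq 0}(x)$ but the functional (\ref{eq:Grho}) includes it; if it is included, Fermat's rule picks up a normal-cone term $N_{\mathbb{R}^N_+}(x_\rho)$, which vanishes at any coordinate with $(x_\rho)_i>0$, so your contradiction survives unchanged.

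Two small repairs. First, the bound $\|Ax_\rho-d\|\leq\|d\|$ is the content of Lemma \ref{lem:bound2}, not Lemma \ref{lem:normA} (which only states $\|A_\omega\|\leq\|A\|$). Second, the ``main obstacle'' you flag at the end is already resolved by your own opening step: since $x_\rho$ globally minimizes the convex map $x\mapsto G_\rho(x,u_\rho)$, comparison with $x=0$ gives
\begin{equation*}
\tfrac{1}{2}\|Ax_\rho-d\|^2 \leq \tfrac{1}{2}\|Ax_\rho-d\|^2 + \rho\bigl(\|x_\rho\|_1-\langle x_\rho,u_\rho\rangle\bigr) \leq \tfrac{1}{2}\|d\|^2,
\end{equation*}
where the first inequality uses that the penalty term is nonnegative (because $|(u_\rho)_i|\leq 1$ for all $i$, as $I(u_\rho)<\infty$) and the second uses that it vanishes at $x=0$. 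No separate localization argument is needed.
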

\begin{proof}
Let $J$ denote the set of indices: $J=\{1,\dots, N\} \backslash\omega$.  If $(x_{\rho},u_{\rho})$ is a local or global minimizer of $G_\rho$ then $\forall (x,u)\in \mathcal{N}((x_{\rho},u_{\rho}),\gamma)$, where $\mathcal{N}((x_{\rho},u_{\rho}),\gamma)$ denotes a neighborhood of $(x_{\rho},u_{\rho})$ of size $\gamma$, we have
\begin{align*}
    \frac{1}{2}\|Ax_{\rho}-d\|^2+\iota_{\cdot\geq 0}(x_{\rho})+I(u_{\rho})+&\rho(\|x_{\rho}\|_1-<x_{\rho},u_{\rho}>)\leq\\
    &\frac{1}{2}\|Ax-d\|^2+\iota_{\cdot\geq 0}(x)+I(u)+\rho(\|x\|_1-<x,u>)
\end{align*}

By choosing $u=u_{\rho}$ and $x=\tilde{x}$ with  $\tilde{x}_J=(x_{\rho})_J$ and $\tilde{x}_\omega=x_\omega$, with $(x_\omega,(u_\rho)_\omega) \in \mathcal{N}(((x_{\rho})_\omega,(u_{\rho})_\omega),\gamma)$, we have 
\begin{equation}
 \frac{1}{2}\|Ax_{\rho}-d\|^2+\iota_{\cdot\geq 0}(x_{\rho})+\rho\|(x_{\rho})_\omega\|_1\leq \frac{1}{2}\|A\tilde{x}-d\|^2+\iota_{\cdot\geq 0}(\tilde{x})+\rho\|x_\omega\|_1   \label{eq:x0s}
\end{equation}

We want to show that $(x_\rho)_\omega$ is zero. We have
\begin{align*}
  \|Ax-d\|^2&=\|Ax\|^2+\|d\|^2-2<Ax,d> \\
  &=\sum_i (Ax)_i^2+\|d\|^2-2\sum_i x_i(A^Td)_i\\
  &=\sum_i\left[(\sum_{j\in J}A_{ij}x_j)^2+(\sum_{j\in \omega}A_{ij}x_j)^2\right]+\|d\|^2-\\
  &\quad\quad 2\left[\sum_{i\in J}x_i(A^Td)_i+\sum_{i\in \omega}x_i(A^Td)_i \right]
\end{align*}
Using the above decomposition simplifies (\ref{eq:x0s}), and we have $\forall\,\,\, x_\omega$: 
\begin{align*}
   \frac{1}{2}\sum_i\left(\sum_{j\in \omega} A_{ij}(x_{\rho})_J\right)^2-&\sum_{i\in \omega} (x_{\rho})_i(A^Td)_i + \rho\|(x_{\rho})_\omega\|_1 + \iota_{\cdot\geq 0}(x_{\rho}) \leq\\
   &\frac{1}{2}\sum_i\left(\sum_{j\in \omega} A_{ij}x_j\right)^2-\sum_{i\in \omega} x_i(A^Td)_i + \rho\|x_\omega\|_1+ \iota_{\cdot\geq 0}(x_{\omega})
\end{align*}

Thus $(x_{\rho})_\omega$ is a solution of 
$$
\argmin_{x_\omega}  \frac{1}{2}\sum_i\left(\sum_{j\in \omega} A_{ij}x_j\right)^2-\sum_{i\in \omega} x_i(A^Td)_i + \rho\|x_\omega\|_1 \iota_{\cdot\geq 0}(x_{\omega})
$$
or, equivalently solution of
\begin{equation}
  \argmin_{x_\omega}  \frac{1}{2}\|A_{\omega}x_\omega-d\|^2 + \rho\|x_\omega\|_1+ \iota_{\cdot\geq 0}(x_{\omega})
\end{equation}
where $A_{\omega}$ is the $M\times \#\omega$ submatrix of $A$ composed by the columns indexed by $\omega$ of $A$.   With Lemma \ref{lem:normA} (see Appendix), we have that $\sigma(A)\geq \sigma(A_{\omega})$ and if $\rho>\sigma(A)\|d\|_2$  we can apply Lemma \ref{lem:wis} (see Appendix) with $w$ a vector composed of $\rho$. We conclude that $(x_{\rho})_\omega=0$. 
\qed\end{proof}

\begin{lemma}\label{lem:const}
If $\rho>\sigma(A)\|d\|_2$, let $(x_\rho,u_\rho)$ be a local or global minimizer  of 
$$\argmin_{x,u} \frac{1}{2}\|Ax-d\|^2 +\iota_{\cdot\geq 0}(x)+\rho(\|x\|_1-<x,u>) + I(u)$$
with $I(u)$ defined as in (\ref{eq:a}), that is, the constrained form. Then $\|x_{\rho}\|_1-<x_{\rho},u_{\rho}>=0$.
\end{lemma}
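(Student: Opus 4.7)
Since $x_\rho \geq 0$ and $-\mathbf{1} \leq u_\rho \leq \mathbf{1}$, the quantity $\|x_\rho\|_1 - \langle x_\rho, u_\rho\rangle = \sum_i (x_\rho)_i(1 - (u_\rho)_i)$ is a sum of nonnegative terms and vanishes if and only if $(u_\rho)_i = 1$ for every $i \in \mathrm{supp}(x_\rho)$. The plan is to pin down the value of each $(u_\rho)_i$ on $\mathrm{supp}(x_\rho)$ and show it equals $1$.

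First I would exploit the biconvexity of $G_\rho$: with $x = x_\rho$ fixed, $u \mapsto G_\rho(x_\rho, u)$ is convex, since $-\rho\langle x_\rho, u\rangle$ is linear and $I(u)$ is the indicator of a convex polytope. Hence the local minimizer $u_\rho$ is automatically a global minimizer of this partial problem and therefore solves the linear program $\max\{\langle x_\rho, u\rangle : \|u\|_1 \leq k,\ -\mathbf{1} \leq u \leq \mathbf{1}\}$. Because $x_\rho \geq 0$, a sign-flip argument rules out negative components on $\mathrm{supp}(x_\rho)$: flipping any negative $(u_\rho)_i$ preserves both $\|u\|_1$ and $|u_i|$ while strictly increasing the LP objective, contradicting optimality. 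Combining this with Lemma~\ref{lem:subPro} in contrapositive form ($(x_\rho)_i \neq 0 \Rightarrow (u_\rho)_i \neq 0$), I conclude $(u_\rho)_i \in (0,1]$ for every $i \in \mathrm{supp}(x_\rho)$.

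Next I would rule out fractional values $(u_\rho)_i \in (0,1)$ on $\mathrm{supp}(x_\rho)$. In the non-degenerate case (distinct entries of $x_\rho$) the LP above has a unique vertex optimum with components in $\{-1, 0, 1\}$, so this is immediate. The delicate case is when ties among the positive entries of $x_\rho$ create a continuum of LP optima; there a purely $u$-based argument is insufficient, and I would invoke Lemma~\ref{lem:uconst} from the Appendix. That lemma packages the joint biconvex analysis: if $(u_\rho)_{i^*} \in (0,1)$ for some $i^* \in \mathrm{supp}(x_\rho)$, a joint perturbation in $(x,u)$—decreasing $(x_\rho)_{i^*}$ toward zero while simultaneously redistributing the freed $u$-mass among the other support indices—strictly decreases $G_\rho$, and the hypothesis $\rho > \sigma(A)\|d\|_2$, playing the same role as in Lemma~\ref{lem:wis}, guarantees that the first-order drop in the penalty term dominates the second-order variation of the quadratic data term, producing the desired contradiction.

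Once fractional values are excluded, $(u_\rho)_i = 1$ for every $i \in \mathrm{supp}(x_\rho)$, and the identity $\|x_\rho\|_1 - \langle x_\rho, u_\rho\rangle = 0$ follows immediately. The main obstacle is the third step: the LP reduction by itself settles only the generic case, and it is precisely the joint biconvex perturbation together with the magnitude condition on $\rho$ (captured by Lemma~\ref{lem:uconst}) that is needed to handle ties in the entries of $x_\rho$.
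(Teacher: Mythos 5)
Your argument is sound and rests on the same two pillars as the paper's proof, namely the structure of the $u$-block subproblem and Lemma~\ref{lem:subPro}; only the bookkeeping differs. The paper splits the indices into $\omega=\{i:(u_\rho)_i=0\}$ and its complement $J$, quotes Lemma~\ref{lem:uconst} to get $(u_\rho)_i(x_\rho)_i=|(x_\rho)_i|$ on $J$ and Lemma~\ref{lem:subPro} to get $(x_\rho)_\omega=0$, so the gap $\sum_i\bigl(|(x_\rho)_i|-(u_\rho)_i(x_\rho)_i\bigr)$ vanishes term by term; you instead split over $\mathrm{supp}(x_\rho)$ and must therefore prove $(u_\rho)_i=1$ there, which is what forces you to exclude fractional values. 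Two remarks. First, your description of Lemma~\ref{lem:uconst} does not match the paper: that lemma is a pure characterization of a minimizer of the $u$-subproblem (the knapsack-type LP $\max\,<x_\rho,u>$ over $\|u\|_1\leq k$, $-\mathbf{1}\leq u\leq\mathbf{1}$), it involves no joint perturbation and no hypothesis on $\rho$, and its proof is dismissed as ``obvious''. Consequently it does not settle the tie case you isolate: when several entries of $x_\rho$ share the threshold value, the LP optimum is not unique and nothing in that lemma forces the particular $u_\rho$ inherited from the local minimizer of $G_\rho$ to be the integral one. Your proposal is more honest than the paper here, which silently assumes the integral form. Second, your sketched joint perturbation (decrease $(x_\rho)_{i^*}$ by $\delta$ while transferring $u$-mass $t$ from $i^*$ to another tied fractional index $j$) is indeed the right repair, but the decisive cancellation comes from first-order stationarity of $x\mapsto G_\rho(x,u_\rho)$ at $(x_\rho)_{i^*}>0$, which gives $<a_{i^*},Ax_\rho-d>+\rho\bigl(1-(u_\rho)_{i^*}\bigr)=0$ and leaves a strictly negative first-order contribution $-\rho\delta t$ from the bilinear coupling; the magnitude condition $\rho>\sigma(A)\|d\|_2$ plays no role in that step --- it is needed only where both you and the paper already use it, inside Lemma~\ref{lem:subPro} via Lemma~\ref{lem:wis}.
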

\begin{proof}
From Lemma \ref{lem:uconst} (see Appendix), we have that $(u_{\rho})_i(x_{\rho})_i=|(x_{\rho})_i| \forall\,\, i \in J$, and $(u_{\rho})_i=0 \, \forall i \in \omega$.   It suffices to prove $(x_{\rho})_i=0\, \forall i \in \omega$.  For that we use Lemma \ref{lem:subPro},  and conclude that $(x_{\rho})_\omega=0$.
\qed \end{proof}
With the two above lemmas we can prove Theorem \ref{theo:vikC}

\begin{proof}
We start by proving the first part of the theorem.
 Let $(x_\rho,u_\rho)$ be a local minimizer of $G_\rho$, with $I(u)$ on the constrained form, that is, defined as in (\ref{eq:a}). Let $\mathcal{S}= \{(x,u); \|x\|_1=<x,u>\}$.
 If $\rho>\sigma(A)\|d\|_2$ then, from Lemma \ref{lem:const}, 
 $$(x_\rho,u_\rho) \text{ verifies } \|x_\rho\|_1=<x_\rho,u_\rho>.$$
 Furthermore, from the definition of a minimizer, we have
 $$
  G_\rho (x_\rho,u_\rho) \leq G_\rho(x,u)\, \, \forall (x,u)\in \mathcal{N}((x_{\rho},u_{\rho}),\gamma)
 $$
and so we have
$$
 G_\rho (x_\rho,u_\rho) \leq G_\rho(x,u) \, \, \forall (x,u)\in \mathcal{N}((x_{\rho},u_{\rho}),\gamma)\cap \mathcal{S}
$$
Since  $\forall (x,u) \in \mathcal{S}, G_\rho (x,u)=G(x_\rho,u_\rho)$, we have
\begin{equation}
     G (x_\rho,u_\rho)\leq G(x,u)  \, \, \forall (x,u)\in \mathcal{N}((x_{\rho},u_{\rho}),\gamma)\cap \mathcal{S}
\end{equation}
By the definition, $(x_\rho,u_\rho)$ is also a local minimizer of $G$.

Now we  prove part 2 of Theorem \ref{theo:vikC}. 

Let $(\hat{x},\hat{u})$ be a global minimizer of $G$. We necessarily have  $\|\hat{x}\|_1=<\hat{x},\hat{u}>$. First, we show that
$$G_\rho(\hat{x},\hat{u})\leq \min G_\rho(x,u).$$ 
This can be shown by contradiction. Assume the opposite, and denote $(x_\rho,u_\rho)$ a global minimizer of $G_\rho$. We then have 
\begin{equation}\label{eq:thecontra}
   G_\rho(\hat{x},\hat{u})> \min G_\rho(x,u)=G_\rho(x_\rho,u_\rho) 
\end{equation}

Lemma \ref{lem:const} shows that $\|x_\rho\|_1=<x_\rho,u_\rho>$, so $G_\rho(x_\rho,u_\rho)=G(x_\rho,u_\rho)$ and we have 
$$
   G(\hat{x},\hat{u})=G_\rho(\hat{x},\hat{u})> \min G_\rho(x,u)=G_\rho(x_\rho,u_\rho) =G(x_\rho,u_\rho)
$$
and more precisely, $G(\hat{x},\hat{u})>G(x_\rho,u_\rho)$ which is not possible, since $(\hat{x},\hat{u})$ is a global minimizer of $G$. 

We therefore have  shown that $G_\rho(\hat{x},\hat{u})\leq \min G_\rho(x,u)$, and we have
$$
 G_\rho(\hat{x},\hat{u})\leq \min G_\rho (x,u)\leq  G_\rho(x,u)\quad \forall (x,u)
$$
$(\hat{x},\hat{u})$ is thus a global minimizer of $G_\rho$.
\qed
\end{proof}

\begin{theorem}[Penalized form]\label{theo:vikP}
Assume that $\rho>\sigma(A)\|d\|_2$, and $A$ is of full rank. Let $G_\rho$ and $G$ be defined respectively in (\ref{eq:Grho}) and (\ref{eq:G}) with on the penalized form with $I(u)$ defined in (\ref{eq:b}). We have:
\begin{enumerate}
    \item If $(x_\rho,u_\rho)$ is a local or global minimizer of $G_\rho$, then we can construct  $(x_\rho,\tilde{u}_\rho)$ which is a local or global minimizer of $G$.
    \item If $(\hat{x},\hat{u})$ is a global minimizer of $G$, then $(\hat{x},\hat{u})$ is a global minimizer of $G_\rho$.
\end{enumerate}
\end{theorem}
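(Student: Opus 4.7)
My plan is to follow the same blueprint as the proof of Theorem \ref{theo:vikC}, the new difficulty being that in the penalized case a local or global minimizer $(x_\rho,u_\rho)$ of $G_\rho$ need not already satisfy the equality constraint $\|x_\rho\|_1 = \langle x_\rho,u_\rho\rangle$ (the $\ell_1$ penalty on $u$ pulls it toward zero on indices where $\rho (x_\rho)_i$ is not large enough). So one cannot simply set $\tilde{u}_\rho = u_\rho$; instead I would define $\tilde{u}_\rho$ coordinatewise by $(\tilde{u}_\rho)_i = 1$ when $(x_\rho)_i > 0$ and $(\tilde{u}_\rho)_i = 0$ when $(x_\rho)_i = 0$. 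Because $x_\rho \geq 0$, this immediately yields $\langle x_\rho,\tilde{u}_\rho\rangle = \sum_i (x_\rho)_i = \|x_\rho\|_1$, so $(x_\rho,\tilde{u}_\rho) \in \mathcal{S}$.

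The crux of the argument is then a key identity $G(x_\rho,\tilde{u}_\rho) = G_\rho(x_\rho,u_\rho)$. To prove it I would use biconvexity: $G_\rho(x_\rho,\cdot)$ is convex in $u$, so the local minimizer $u_\rho$ is actually a global minimizer of the inner problem $\min_{|u|\leq \mathbf 1}\{\lambda\|u\|_1 - \rho\langle x_\rho,u\rangle\}$. Solving this coordinatewise using $x_\rho\geq 0$ yields: if $(x_\rho)_i = 0$ then $(u_\rho)_i = 0$; if $(x_\rho)_i > 0$ then either $\rho(x_\rho)_i > \lambda$ and $(u_\rho)_i = 1$, or $\rho(x_\rho)_i = \lambda$ and $(u_\rho)_i$ lies anywhere in $[0,1]$, the case $\rho(x_\rho)_i < \lambda$ being ruled out by Lemma \ref{lem:subPro} applied to the penalized form (which gives $(u_\rho)_i = 0 \Rightarrow (x_\rho)_i = 0$). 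Substituting $\rho(x_\rho)_i = \lambda$ wherever $(u_\rho)_i<1$ transforms the penalty term into
\[
\rho\bigl(\|x_\rho\|_1 - \langle x_\rho,u_\rho\rangle\bigr) = \lambda\sum_{i\in\mathrm{supp}(x_\rho)}\bigl(1 - (u_\rho)_i\bigr) = \lambda\bigl(\|\tilde{u}_\rho\|_1 - \|u_\rho\|_1\bigr),
\]
and the key identity follows by direct substitution into $G_\rho$ and $G$.

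Part 1 in the global case is then immediate: $G(x_\rho,\tilde{u}_\rho) = G_\rho(x_\rho,u_\rho) \leq G_\rho(x,u) \leq G(x,u)$ for every $(x,u)$, since $G_\rho \leq G$ pointwise and coincides with $G$ on $\mathcal{S}$. Part 2 is proved by the contradiction argument of Theorem \ref{theo:vikC}: if $(\hat x,\hat u)$ is a global minimizer of $G$ but $G_\rho(\hat x,\hat u) > \min G_\rho = G_\rho(x_\rho,u_\rho)$, then using $(\hat x,\hat u) \in \mathcal{S}$ and the key identity gives $G(\hat x,\hat u) = G_\rho(\hat x,\hat u) > G_\rho(x_\rho,u_\rho) = G(x_\rho,\tilde{u}_\rho)$, contradicting the global optimality of $(\hat x,\hat u)$ for $G$.

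The hard part will be the local case of Part 1, because $\tilde{u}_\rho$ is discrete-valued and a neighborhood of $(x_\rho,\tilde{u}_\rho)$ does not correspond naturally to a neighborhood of $(x_\rho,u_\rho)$ where the local optimality of $G_\rho$ is assumed. My plan here is to exploit the geometry of $\mathcal{S}$ in a sufficiently small ball around $(x_\rho,\tilde{u}_\rho)$: for $\gamma$ smaller than $\min_{(x_\rho)_i>0}(x_\rho)_i$, any admissible $(x,u)$ in $\mathcal{S}$ close to $(x_\rho,\tilde{u}_\rho)$ is forced to have $\mathrm{supp}(x) = \mathrm{supp}(x_\rho)$ together with $u_i = 1$ on this support, so that $G(x,u) \geq \frac{1}{2}\|Ax-d\|^2 + \lambda\|x_\rho\|_0$. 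One then reduces the desired local inequality to showing that $x_\rho$ is a local minimizer of the partial problem $\min_x G_\rho(x,u_\rho)$ restricted to the active support, which follows from biconvexity (that partial problem is convex, hence $x_\rho$ is even a global minimizer) combined with the computation of the first-order terms in the penalty $\rho(\|x\|_1 - \langle x,u_\rho\rangle)$ along the support of $x_\rho$, which reintroduce exactly the deficit between $G_\rho(x_\rho,u_\rho)$ and $G(x_\rho,\tilde{u}_\rho)$ established in the key identity.
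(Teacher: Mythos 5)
Your construction of $\tilde u_\rho$, the key identity $G(x_\rho,\tilde u_\rho)=G_\rho(x_\rho,u_\rho)$, and the global halves of both parts essentially reproduce the paper's argument: the identity is Remark \ref{rem:const} (constancy of $G_\rho$ along $(x_\rho)_i=\lambda/\rho$, $(u_\rho)_i\in[0,1]$) combined with $G_\rho=G$ on $\mathcal S$ and $G_\rho\leq G$ everywhere, and your coordinatewise description of $u_\rho$ is Lemma \ref{lem:upenal} together with Lemma \ref{lem:Penal}. Those portions are correct.

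The gap is in the local case of Part 1, which you rightly flag as the hard part but do not close. After you force $\mathrm{supp}(x)=\mathrm{supp}(x_\rho)$ and $u_i=1$ on that support for every $(x,u)\in\mathcal S$ near $(x_\rho,\tilde u_\rho)$, the inequality you still need is $\frac12\|Ax-d\|^2\geq\frac12\|Ax_\rho-d\|^2$ for such $x$. This does not follow from $x_\rho$ minimizing the convex partial problem $G_\rho(\cdot,u_\rho)$: on the set $Z=\{i:\,0<(u_\rho)_i<1\}$ one has $(x_\rho)_i=\lambda/\rho>0$ and stationarity reads $\bigl(A^T(Ax_\rho-d)\bigr)_i=-\rho\bigl(1-(u_\rho)_i\bigr)<0$, so the data term strictly decreases along $x_i\mapsto x_i+t$, $t>0$ small. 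The compensating linear term $\rho\bigl(1-(u_\rho)_i\bigr)x_i$ is present in $G_\rho(\cdot,u_\rho)$ but absent from $G(\cdot,\tilde u_\rho)$ once $u_i$ is pinned to $1$ by membership in $\mathcal S$; the two functions therefore differ by a nonconstant linear function of $x_Z$, and your claim that the first-order terms ``reintroduce exactly the deficit'' is not substantiated (the deficit in the key identity is a number evaluated at $x_\rho$, not a function of $x$ that could cancel those terms). The paper orders the steps differently (Remark \ref{rem:penalRem}): it first asserts that $(x_\rho,\tilde u_\rho)$ is itself a local minimizer of the full function $G_\rho$, and only then restricts the inequality $G_\rho(x_\rho,\tilde u_\rho)\leq G_\rho(x,u)$ to $\mathcal N\cap\mathcal S$, where $G_\rho=G$, so the compensating term stays in play until the last step. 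To repair your route you must either adopt that ordering, or show that $Z=\emptyset$ for the minimizers under consideration — which does hold at global minimizers (there $\tilde u_\rho=u_\rho$ and your argument already goes through), but is exactly what is at issue for merely local ones.
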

For the proof, we need two lemmas, Lemma \ref{lem:subPro} which is already presented and the following lemma.

\begin{lemma}\label{lem:Penal}
Let $(x_\rho,u_\rho)$ be a local or a global minimizer of $G_\rho$ for the penalized form ($I(u)$ defined as in (\ref{eq:b})). If $\rho>\sigma(A)\|d\|_2$ then $\forall \, i$ such that $ (u_{\rho})_i=0$ we have $(x_{\rho})_i =0$ 
\end{lemma}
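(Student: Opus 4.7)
The plan is essentially to observe that Lemma \ref{lem:Penal} is already a corollary of Lemma \ref{lem:subPro}. The hypothesis of Lemma \ref{lem:subPro} is explicitly stated for $I(u)$ equal to either the constrained form (\ref{eq:a}) or the penalized form (\ref{eq:b}), and its conclusion, namely that $(x_\rho)_i = 0$ for every $i$ in $\omega := \{j : (u_\rho)_j = 0\}$, is exactly the statement of Lemma \ref{lem:Penal}. So my first and essentially only step is to invoke Lemma \ref{lem:subPro} with $I(u) = \lambda\|u\|_1 + \iota_{[-1,1]^N}(u)$ and with the assumption $\rho > \sigma(A)\|d\|_2$, which is already present in the hypothesis of Lemma \ref{lem:Penal}.

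If one wanted to unwind why this works in the penalized case specifically, the argument of Lemma \ref{lem:subPro} does not really depend on the precise form of $I(u)$ beyond the fact that $I$ depends on $u$ alone. When we perturb $x$ on the block of coordinates $\omega$ while fixing $u = u_\rho$ and $x_J = (x_\rho)_J$ with $J = \{1,\dots,N\}\setminus \omega$, the term $I(u_\rho)$ is unchanged on both sides of the local-minimality inequality, and the coupling $-\rho\langle x, u_\rho\rangle$ contributes nothing on $\omega$ because $(u_\rho)_\omega = 0$. The local optimality on the $\omega$-block therefore reduces (up to additive constants) to
$$(x_\rho)_\omega \in \argmin_{x_\omega \geq 0}\ \tfrac{1}{2}\|A_\omega x_\omega - d\|^2 + \rho\|x_\omega\|_1,$$
which is the same reduced subproblem that appears in the proof of Lemma \ref{lem:subPro}.

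From here, Lemma \ref{lem:normA} yields $\sigma(A_\omega) \leq \sigma(A)$, so the assumption $\rho > \sigma(A)\|d\|_2$ implies $\rho > \sigma(A_\omega)\|d\|_2$, which is exactly what Lemma \ref{lem:wis} requires (with the weight vector $w$ taken to be the constant vector $\rho\mathbf{1}$) in order to force the minimizer of the reduced subproblem to vanish. We conclude $(x_\rho)_\omega = 0$, i.e. $(x_\rho)_i = 0$ for every $i$ with $(u_\rho)_i = 0$. There is no real obstacle in this proof — the difficult content has been packaged into Lemma \ref{lem:subPro} and the appendix lemmas, and the only thing to verify is that the penalized form of $I(u)$ disappears under the perturbation, which it does trivially because $I$ is a function of $u$ alone.
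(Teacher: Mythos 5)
Your proposal is correct and takes essentially the same route as the paper, whose proof of Lemma \ref{lem:Penal} likewise reduces immediately to Lemma \ref{lem:subPro} applied with $\omega = \{i : (u_\rho)_i = 0\}$ (the paper also cites Lemma \ref{lem:upenal} to characterize that index set, but that citation is not needed for the conclusion). Your unpacking of why the block perturbation argument is insensitive to the particular form of $I(u)$, and the subsequent use of Lemma \ref{lem:normA} and Lemma \ref{lem:wis} with $w = \rho\mathbf{1}$, matches the paper's proof of Lemma \ref{lem:subPro}.
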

\begin{proof}
From Lemma \ref{lem:upenal} (see Appendix), we have that $(u_{\rho})_i=0$ iff $(x_{\rho})_i\in ]-\frac{\lambda}{\rho},\frac{\lambda}{\rho}[$.  We denote $\omega$ the set of indices where $u_{\rho}=0$, and we can apply  Lemma \ref{lem:subPro},  and conclude that $(x_{\rho})_{\omega}=0$.

\qed \end{proof}
\begin{remark}\label{rem:const}
If $\rho>\sigma(A)\|d\|_2$, note that the cost function $G_\rho$ with minimizers $(x_{\rho},u_{\rho})$  is constant on $|(x_{\rho})_i|=\frac{\lambda}{\rho}$ and $|(u_{\rho})_i|\in [0, 1]$.  
\end{remark}
\begin{remark} \label{rem:x}
In the case of the penalized form, the minimizers ($x_{\rho},u_{\rho}$) of $G_\rho$ with $\rho>\sigma(A)\|d\|_2$ may be such that $<x_{\rho},u_{\rho}> \neq \|x_{\rho}\|_1$. This may only happen if $|(x_{\rho})_i|=\frac{\lambda}{\rho}$. 
\end{remark}
\begin{remark}\label{rem:penalRem}
If $\rho>\sigma(A)\|d\|_2$. From Remark \ref{rem:const}, from a minimizer $(x_{\rho},u_{\rho})$ of $G_\rho$, we can construct a minimiser $(x_{\rho},\tilde{u}_\rho)$ of $G_\rho$ such that $\|x_{\rho}\|_1=<x_{\rho},\tilde{u}_\rho>$. This can be done by denoting $Z$, the set of indices such that $0<|(u_{\rho})_i|<1$. If $Z$ is non-empty, we have $<x_\rho,u_\rho>\neq \|x_\rho\|_1$. From Remark \ref{rem:x}, $|(x_{\rho})_i|=\frac{\lambda}{\rho} \forall i \in Z$. Take $\tilde{u}_{\rho i} = sign(x_i) \,\,\, \forall i \in Z$ and $\tilde{u}_{\rho i} = (u_{\rho})_i \, \forall i \notin Z$, then $<x_\rho,\tilde{u}_\rho>=\|x_\rho\|_1$. Furthermore, $(x_\rho,\tilde{u}_\rho)$ is a minimizer of $G_\rho$ after Remark \ref{rem:const} and the fact that $G_\rho(x_\rho,u)$ is convex with respect to $u$. 
\end{remark}

With Lemma \ref{lem:Penal} and the above remarks, we can prove Theorem \ref{theo:vikP}. 
\begin{proof}
We start by proving the first part of the theorem.
 Given $(x_\rho,u_\rho)$ a local or global minimizer of $G_\rho$, with $I(u)$ on the penalized form, that is, defined as in (\ref{eq:a}). Let $\mathcal{S}$ denote the space where $\|x\|_1=<x,u>$.
 If $\rho>\sigma(A)\|d\|_2$ then, from remark \ref{rem:penalRem}, we can construct $(x_\rho,\tilde{u}_\rho)$ such that 
 $$(x_\rho,\tilde{u}_\rho) \text{ verifies } \|x_\rho\|_1=<x_\rho,\tilde{u}_\rho>.$$
 Furthermore, from the definition of a minimizer, we have
 $$
 G_\rho (x_\rho,\tilde{u}_\rho)\leq  G_\rho(x,u) \, \, \forall (x,u)\in \mathcal{N}((x_{\rho},\tilde{u}_\rho),\gamma)
 $$
and so we get
$$
  G_\rho (x_\rho,\tilde{u}_\rho)\leq  G_\rho(x,u) \, \, \forall (x,u)\in \mathcal{N}((x_{\rho},\tilde{u}_\rho),\gamma)\cap \mathcal{S}
$$
Since  $\forall (x,u) \in \mathcal{S}, G_\rho (x,u)=G(x_\rho,u_\rho)$, we obtain
\begin{equation}
    G (x_\rho,\tilde{u}_\rho) \leq  G(x,u)\, \, \forall (x,u)\in \mathcal{N}((x_{\rho},\tilde{u}_\rho),\gamma)\cap \mathcal{S}
\end{equation}
Then, $(x_\rho,\tilde{u}_\rho)$ is also a local minimizer of $G$.

The second part of Theorem \ref{theo:vikP} can be proved as in the proof of Theorem \ref{theo:vikC}.
\qed
\end{proof}

Theorem \ref{theo:vikC} and \ref{theo:vikP} show that, for a given $\rho$,  minimizing (\ref{eq:Grho}) is equivalent in terms of minimizers as minimizing (\ref{eq:G}).
The results in this section are similar to \cite{yuan_sparsity_2016}. In their paper, they, instead of working with the square norm, work with a Lipschitz continuous function $f$. We were inspired by their work to extend it to the square norm.  They have a theorem equivalent to Theorem \ref{theo:vikC}, but the lower bound for $\rho$ is less sharp. Furthermore, the paper \cite{yuan_sparsity_2016} does not tackle the penalized sparsity problem and thus has not a theorem equivalent to Theorem \ref{theo:vikP}.   

Although  $G_{\rho}(x,u)$ in (\ref{eq:Grho}) is non-convex,  the formulation is biconvex, i.e, the functional is convex with respect to $x$ when $u$ is constant and conversely.  With that in mind, we propose in the next section an algorithm to minimize (\ref{eq:Grho}).

\section{A minimization algorithm}

The functional $G_\rho$ has two interesting particularities. The first is that the non-convexity of $G_\rho$  is due to the coupling term $<x,u>$. $G_\rho$ is therefore convex when the penalty parameter $\rho$ equals to zero. This inspires the idea of an algorithm to minimize $G_\rho(x,u)$. The minimization starts with a $\rho^0$ small and minimizes $G_{\rho^0}(x^0,u^0)$. For each iteration, the penalty parameter, $\rho$, increases and the solution of the previous iteration are used as initialization for the next minimization. This method will hopefully give a good initialization for the final minimization, that is when $\rho$ is according to Theorem \ref{theo:vikC} and Theorem \ref{theo:vikP}. The second interesting property of functional $G_\rho$ is the biconvexity. Minimization by blocks is therefore interesting since with respect to each block (that is either $x$ or $u$), the problem is convex.  With this in mind, and following \cite{yuan_sparsity_2016}, we propose the following algorithm. 

\begin{algorithm}[H]
  \Entree{\par\leftskip=1.5em
    $\rho^0$ small ;
    }\BlankLine
    
    \Init{\par\leftskip=1.5em
    $x^0=\mathbf{0}\in \mathbb{R}^N$;  $u^0=\mathbf{0}\in \mathbb{R}^N$; $p=0$;}\BlankLine
    
  \rep{\par\leftskip=1.5em
  Solve problem $G_{\rho^p}$
    \begin{equation}
        \{x^{p+1},u^{p+1}\}\in \argmin G_{\rho^p}(x^p,u^p)
        \label{eq:usePalm}
    \end{equation}
 Update the penalty term
 \begin{equation}
     \rho^{p+1}=\min(\sigma(A)\|d\|_2,2\rho^{p})
 \end{equation}
  }\BlankLine
  
  \until{
  $\rho^{p+1}=\sigma(A)\|d\|_2$   
 }\BlankLine
  \Sortie{
  	$x^{p+1}$}
    
  \caption{Biconvex minimization}
   \label{alg:MPECEPM}
  \end{algorithm}

The minimization of (\ref{eq:usePalm}) is done by using the Proximal Alternating Minimization algorithm (PAM) \cite{attouch_proximal_2008} which ensures convergence to a critical point. The  PAM minimizes functions on the form 
\begin{equation}
    L(x,u)=f(x)+g(u)+Q(x,u)
\end{equation}
In our case, we have, $f(x)=\frac{1}{2}\|Ax-d\|^2+\rho\|x\|_1+ \iota_{\cdot\geq 0}(x)$, $g(u)=I(u)$ and $Q(x,u)=-\rho<x,u>$.
PAM has the following outline 
$$
\begin{cases} \text{Repeat}\\
x^{s+1}\in \argmin_x \left \{G_\rho(x,u^s) +\frac{1}{2c^s}\|x-x^s\|_2^2 \right\}\\
u^{s+1}\in \argmin_u \left \{G_\rho(x^{s+1},u) +\frac{1}{2b^s}\|u-u^s\|_2^2 \right\}\\
\text{Until convergence}
\end{cases}
$$
$c^s$ and $b^s$ add strict convexity to each block, and $c^s,b^s$ are bounded from below and above.  

In the following section we develop the minimization schemes for (\ref{eq:usePalm}) in the case of the constrained problem ($I(u)$ defined as in (\ref{eq:a})) respectively the penalized problem ($I(u)$ defined as in (\ref{eq:b})). We recall the minimization of $G_\rho$  is
\begin{equation}
    \argmin_{x,u} \frac{1}{2}\|Ax-d\|^2+I(u)+\rho(\|x\|_1-<x,u>)+\iota_{\cdot\geq 0}(x) \label{eq:Constrho}
\end{equation}
where $I(u)$ is defined in (\ref{eq:a}) or in (\ref{eq:b}).

\subsection{The minimization with respect to $x$.}\label{sec:mimix}
The minimization with respect to $x$ using  PAM  is 
$$
x^{s+1}\in\argmin_x \frac{1}{2}\|Ax-d\|^2 + \rho(\|x\|_1 - <x,u^s>) +\frac{1}{2c^s}\|x-x^s\|_2^2+\iota_{\cdot\geq 0}(x)
$$
which can be rewritten as 
$$
x^{s+1}\in\argmin_x \frac{1}{2}\|Ax-d\|^2 + \frac{1}{2c^s}\|x-(x^s+\rho c^su^s)\|^2 + \rho\|x\|_1+\iota_{\cdot\geq 0}(x)
$$
This problem can be solved using the FISTA algorithm \cite{beck_fast_2009}. The algorithm works with a functional $F(x)=f(x)+g(x)$ where $f$ is a smooth convex function with a Lipschitz continuous gradient $L(f)$. $g$ is a continuous  convex function and possibly non-smooth.  In our case we have
\begin{align}
    f(x)&=\frac{1}{2}\|Ax-d\|^2 + \frac{1}{2c^s}\|x-(x^s+\rho c^su^s)\|^2\\
    g(x)&=\rho\|x\|_1 +\iota_{\cdot\geq 0}(x)
\end{align}

The proximal operator of $g(x)$ is the soft thresholding with positivity constraint
$$
\text{prox}_{\frac{g}{L(f)}}(x)=\begin{cases}
x_i-\frac{\rho}{L(f)} \text{ if } x_i>\frac{\rho}{L(f)}\\
0 \text{ if } x_i \leq \frac{\rho}{L(f)}]
\end{cases} 
$$

\subsection{The minimization with respect to $u$}
In this section we study how to find a solution to the following convex minimization problem 
$$
u^{s+1}=\argmin_{u} \frac{1}{2b^s}\|u-u^s\|_2^2-\rho<x^{s+1},u> +I(u)
$$
The above problem can be rewritten as 
\begin{equation}\label{eq:minU}
    u^{s+1}=\argmin_u \frac{1}{2b^s}\|u-(u^s+\rho b^sx^{s+1})\|^2 +I(u) 
\end{equation}

and for simplicity we denote $z=u^s+\rho b^sx^{s+1}$. 
\subsubsection{The constrained minimization of $u$}
In this section we work with the constrained formulation of $G_\rho$. Then the minimization problem (\ref{eq:minU}) can be simplified and written as 
$$
u^{s+1}=\argmin_u \frac{1}{2}\|u-z\|^2 \text{ s.t. } \|u\|_1 \leq k  \text{ and } \forall \, \, i, \, -1\leq u_i\leq 1
$$

Since the minimizer of $\argmin_u \frac{1}{2}\|u-z\|^2$ is reached for $u=z$, we can write $u^{s+1}=sign(z)\argmin_u \frac{1}{2}\|u-|z|\|^2$. Furthermore, since the $\|\cdot\|_1$ is invariant with respect to the sign, we can rewrite the minimization problem as 
$$
|u^{s+1}|=\argmin_{u} \frac{1}{2}\|u-|z|\|^2 \text{ s.t. } \|u\|_1 \leq  k \text{ and } \forall \, \, i, \, 0\leq u_i\leq 1
$$
and then $u^{s+1}=sign(z)|u^{s+1}|$.
This minimization problem is a variant of the  knapsack problem  which can be solved using classical minimization schemes such as \cite{doi:10.1155/S168712000402009X} : 
\begin{align*}
|u^{s+1}|= \argmin_u &\frac{1}{2}<u,u>-<u,|z|> \\
&\text{ s.t. } \left(\sum_i u_i \right)\leq k\\
& \text{ and } \forall \, \, i, \, 0\leq u_i\leq 1
\end{align*}

\subsubsection{The penalized minimization of  $u$}
The minimization of (\ref{eq:minU}) with respect to $u$, with $I(u)$ on the penalized form (\ref{eq:b}), can be written as
$$u^{s+1}=\argmin_{u} \lambda\|u\|_1+\frac{1}{2b^s}\|u-z\|^2 +\iota_{-1\leq \cdot \leq 1}(u) $$

\begin{proposition}
The solution $u^{s+1}$ of 
\begin{equation}\label{eq:minUP}
    \argmin_{u} \lambda\|u\|_1+\frac{1}{2}\|u-z\|^2 +\iota_{-1\leq \cdot \leq 1}(u)
\end{equation}
is reached for 
$$
(u^{s+1}_{\rho})_i=
\begin{cases}
1 \text{ if } z_i\in [1+\lambda b^s, \infty[\\
z_i-\lambda b^s \text{ if } z_i\in ]\lambda b^s,1+\lambda b^s[\\
0 \text{ if } z_i\in\lambda b^s[-1,1]\\
z_i+\lambda b^s \text{ if } z_i\in ]-1-\lambda b^s,-\lambda b^s[  \\
-1 \text{ if } z_i\in ]-\infty,-1-\lambda b^s ]
\end{cases}$$
\end{proposition}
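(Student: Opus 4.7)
The plan is to exploit the fact that the objective is fully separable across coordinates: both $\|u\|_1 = \sum_i |u_i|$, the quadratic $\frac{1}{2b^s}\|u-z\|^2 = \sum_i \frac{1}{2b^s}(u_i - z_i)^2$, and the box indicator $\iota_{-1\leq \cdot \leq 1}(u) = \sum_i \iota_{[-1,1]}(u_i)$ decompose coordinate-wise. (Note: for the scalings to match the claimed closed form, the quadratic in the proposition should be read as $\frac{1}{2b^s}\|u-z\|^2$; up to multiplication by $b^s$, this is equivalent to $\frac{1}{2}\|u-z\|^2 + \lambda b^s |u|$ subject to $u \in [-1,1]$.) Hence it suffices to solve, for each $i$ independently, the one-dimensional convex problem
\begin{equation*}
u_i^\star \in \argmin_{v \in [-1,1]} \; \lambda b^s |v| + \tfrac{1}{2}(v - z_i)^2.
\end{equation*}

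Next, I would apply first-order optimality. The objective is convex and lower semicontinuous, so $u_i^\star$ is characterized by
\begin{equation*}
0 \in v - z_i + \lambda b^s \, \partial |v| + N_{[-1,1]}(v),
\end{equation*}
with the subdifferentials given by $\partial|v| = \{\mathrm{sign}(v)\}$ for $v\neq 0$, $\partial|0| = [-1,1]$, and $N_{[-1,1]}(v) = \{0\}$ for $v \in (-1,1)$, $N_{[-1,1]}(1) = [0,\infty)$, $N_{[-1,1]}(-1) = (-\infty,0]$.

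The main (and only) work is then a clean case analysis on the value of $u_i^\star$. If $u_i^\star \in (0,1)$, optimality forces $u_i^\star = z_i - \lambda b^s$, which is admissible exactly when $z_i \in (\lambda b^s, 1+\lambda b^s)$; saturating at $u_i^\star = 1$ requires $z_i - 1 \geq \lambda b^s$, i.e., $z_i \geq 1 + \lambda b^s$; the symmetric analysis handles $u_i^\star \in (-1,0)$ and $u_i^\star = -1$; and $u_i^\star = 0$ corresponds to $z_i \in \lambda b^s \partial|0| = [-\lambda b^s, \lambda b^s]$. Collecting these five mutually exclusive cases reproduces the piecewise formula in the statement. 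Since the objective is strictly convex in $v$, the minimizer is unique and the case distinction is exhaustive. I do not anticipate a serious obstacle: the result is essentially the observation that the proximal operator of $\lambda b^s|\cdot| + \iota_{[-1,1]}(\cdot)$ is the composition of the soft-threshold of level $\lambda b^s$ with the projection onto $[-1,1]$, which the case analysis verifies directly.
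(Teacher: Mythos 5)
Your proposal is correct and follows essentially the same route as the paper: coordinate-wise separation, first-order optimality via the subdifferential of $\lambda b^s|\cdot|$ plus the normal cone of $[-1,1]$, and a five-case analysis on the value of $u_i^{s+1}$. You also rightly note the scaling discrepancy between the $\tfrac{1}{2}\|u-z\|^2$ written in the proposition and the $\lambda b^s$ appearing in the closed form; the paper's own proof implicitly uses the $\tfrac{1}{2b^s}$ normalization as well, so your reading is the intended one.
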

\begin{proof}

Problem (\ref{eq:minUP}) has a closed form expression which can be found by calculating  the subgradient for the problem (\ref{eq:minUP}) with respect to $u$. Note that the subgradient of the box constraint $\iota_{-1\leq \cdot \leq 1}$ is 0 if $|u_i|< 1$, $[0,\infty[$ if $u_i=1$ and $]-\infty,0]$ if $u_i=-1$. We obtain the following optimal conditions: 
$$
0 \in 
\begin{cases}
\lambda + [0,\infty[+ \frac{1}{b^s}(u^{s+1}_i-z_i) \text{ if } u^{s+1}_i=1\\
\lambda +\frac{1}{b^s}(u^{s+1}_i-z_i) \text{ if } 1>u^{s+1}_i>0\\
\lambda[-1,1] -\frac{1}{b^s}(z_i) \text{ if } u^{s+1}_i=0\\
-\lambda +\frac{1}{b^s}(u^{s+1}_i-z_i) \text{ if } -1<u^{s+1}_i<0\\
-\lambda + ]-\infty,0]+ \frac{1}{b^s}(u^{s+1}_i-z_i) \text{ if } u^{s+1}_i=-1\\
\end{cases}
$$
and the optimal solution $u_{\rho}$ is 
$$
(u^{s+1}_{\rho})_i=
\begin{cases}
1 \text{ if } z_i\in [1+\lambda b^s, \infty[\\
z_i-\lambda b^s \text{ if } z_i\in ]\lambda b^s,1+\lambda b^s[\\
0 \text{ if } z_i\in\lambda b^s[-1,1]\\
z_i+\lambda b^s \text{ if } z_i\in ]-1-\lambda b^s,-\lambda b^s[  \\
-1 \text{ if } z_i\in ]-\infty,-1-\lambda b^s ]
\end{cases}$$
\qed
\end{proof}

\section{Application to 2D single-molecule localization microscopy}
In this section, we compare the minimization of the biconvex reformulations to the algorithm Iterative Hard Thresholding \cite{combettes2005signal} where we add the non-negativity constraint to $x$. This  algorithm  performs as well to both formulations (\ref{eq:sparseconst}) and (\ref{eq:sparspenalty}). They are applied to the problem of 2D Single-Molecule Localization Microscopy (SMLM). 

SMLM is a microscopy method which is used to obtain images with a higher resolution than what is possible with normal optical microscopes. This was first introduced in \cite{hess2006ultra, betzig2006imaging,rust2006sub}. Fluorescent microscopy uses molecules that can emit light when they are excited with a laser. The molecules are observed with an optical microscope, and, since the molecules are smaller than the diffraction limit, what is observed is not each molecule, but rather a diffraction disk larger than the molecule. This limits the resolution of the image. SMLM exploits photoactivatable fluorescent molecules, and, instead of activating all the molecules at once as done by other fluorescent microscopy methods, activates a sparse set of fluorescent molecules.   The localization of each molecule with a high precision is possible since the probability of two or more molecules to be in the same diffraction disk is small. The localization becomes harder if the density of emitting molecules is higher. Once each molecule has been precisely localized, they are switched off and the process is repeated until all the molecules have been activated. The total acquisition time may be long when activating few molecules at a time, which is unfortunate as SMLM may be used on living samples which can move during this time. This will lead to a faulty reconstruction. We are, in this paper, interested in high-density acquisitions. 

The localization problem of SMLM can be described as a $\ell_2-\ell_0$ minimization problem such as (\ref{eq:sparspenalty}) and (\ref{eq:sparseconst}) with an added positivity constraint since we reconstruct the intensity of the molecules. The two biconvex formulations can be applied to the SMLM problem. $A $ is the matrix operator that performs a convolution with the Point Spread function and a reduction of dimensions. The molecules are reconstructed on a grid $\in \mathbb{R}^{ML\times ML}$ which is finer than the observed image $\in \mathbb{R}^{M \times M}$, with $L>1$. For a complete lecture on the mathematical model, see \cite{gazagnes2017high}.

We test the algorithms on two datasets, both accessible from the ISBI 2013 challenge \cite{sage2015quantitative}. Both datasets are of high-density acquisitions. The first dataset contains simulated acquisitions, which makes it possible to do a numerical evaluation of the reconstruction. The second dataset contains real acquisitions.  For a complete lecture on the SMLM and the different localization algorithms, see the ISBI-SMLM challenge \cite{sage2015quantitative}.  In Figure \ref{fig:donneesISBI} are three of the 361 acquisitions of the simulated dataset. We apply the localization algorithms to each acquisition, and the results of the localization of the 361 acquisitions yields one super-resolution image. 
\begin{figure}[]
		\centering
		\includegraphics[width=3.2cm]{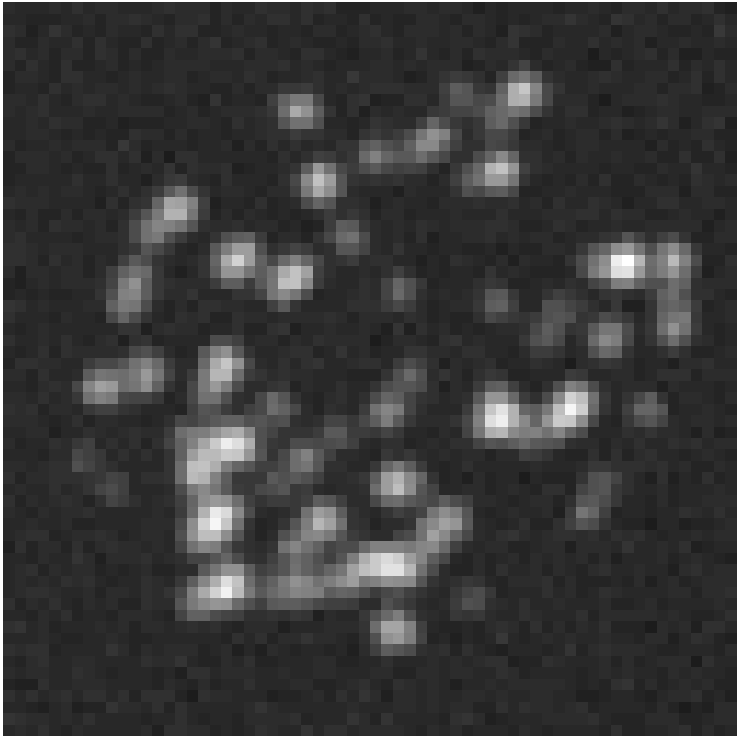}\hspace{0.2cm}
		\includegraphics[width=3.2cm]{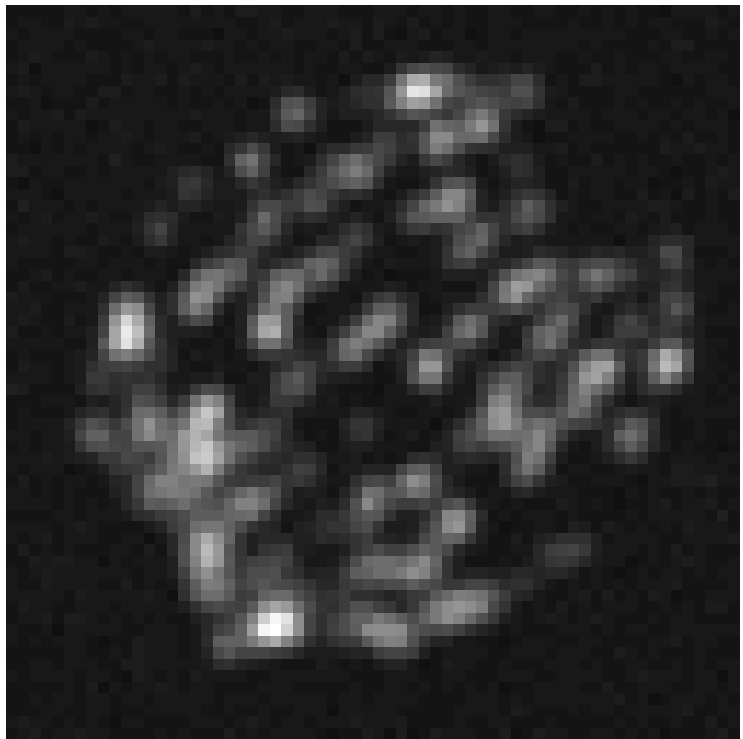}\hspace{0.2cm}
		\includegraphics[width=3.2cm]{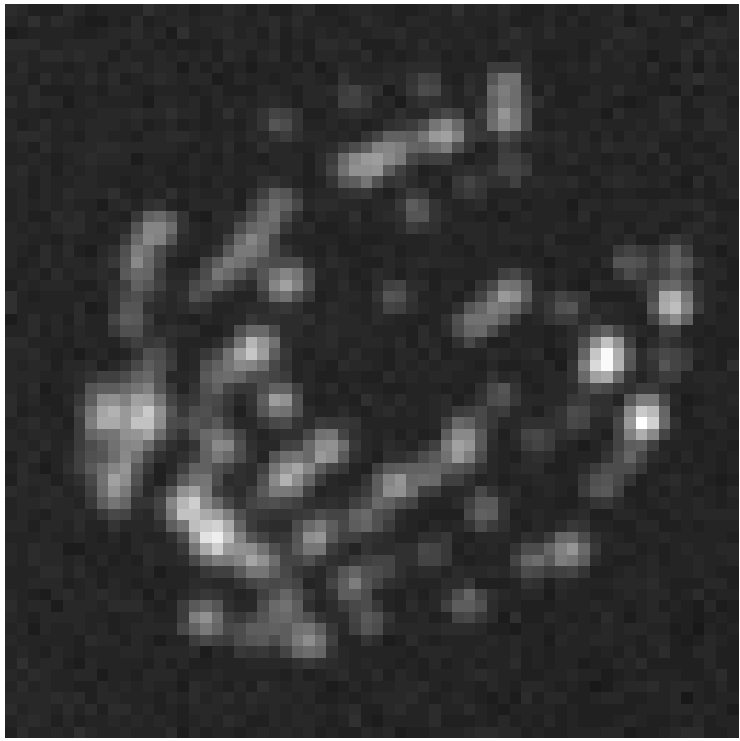}
		\caption{\label{fig:donneesISBI} Simulated images  (among the 361 simulated high density acquisitions). }
	\end{figure}
We use the Jaccard index in order to perform a numerical evaluation of the reconstructions. The Jaccard index evaluates only the localization of the reconstructed molecules (see \cite{sage2015quantitative}). The Jaccard index is the ratio between the correctly reconstructed (CR) molecules and the sum of CR-, false negatives (FN)- and false positives (FP) molecules. The index is 1 for a perfect reconstruction, and the lower the index, the poorer the reconstruction. The Jaccard index includes a tolerance of error in its calculations of correctly reconstructed molecules. 
\begin{equation*}
    Jac=\frac{CR}{CR+FP+FN}.
    \label{eq:Jaccard}
\end{equation*}

\subsection{Results of the simulated dataset}
The ISBI simulated dataset represents 8 tubes of 30 nm diameter. The acquisition is of the size of $64\times 64$ pixels where each pixel is of size $100 \times 100$ nm$^2$. The Point Spread Function (PSF) is modeled by a Gaussian function where the Full Width at Half Maximum (FWHM) is 258.21 nm. In total there are 81 049 molecules on a total of 361 images. 

We localize the molecules with a higher precision on a $256\times 256$ pixel image, where the size of each pixel is $25\times 25$ nm$^2$. As an optimization problem, this is equivalent to reconstruct $x\in \mathbb{R}^{ML\times ML}$ for an acquisition $d\in \mathbb{R}^{M\times M}$, where $M=64$ and $L=4$.  The center of the pixel is used to estimate the position of the molecule.

We set $k$, the maximum number of molecules the algorithm reconstructs, equal to 220 for the  constrained problem. This number is the average number of molecules for each acquisition, which we know from the ground truth. Note that in order to observe the reconstruction, we normalize the image, that is, we let the smallest value in the image to be 0, and the largest to be 1. Each pixel has an intensity between 0 and 1, and the brighter the pixel the stronger the intensity.

 We set $\rho=0.1$ for the biconvex algorithms. Note that a smaller $\rho$ could be chosen, but this implies longer computational time.  Both constrained algorithms reconstruct 220 molecules for each acquisition. We choose a $\lambda$ for the two penalized algorithms such that they reconstruct around 220 molecules on average. For the IHT, $\lambda=0.13$ and for the biconvex penalized $\lambda=0.019$. We initialize the IHT with applying the conjugate of the operator $A$ on the acquisition.  The results of the reconstructions are shown in Figure \ref{fig:xsim3}. Both biconvex reformulations reconstruct the tubes thicker than the ground truth.  The two IHT algorithms do not manage to distinguish between two tubes when they are close (see the red case in Figure \ref{fig:xsim3}) compared to the biconvex reformulations. The Jaccard index is shown in Table \ref{tab:sim3}. We observe the low Jaccard index of the IHT constrained algorithm compared to the biconvex  constrained algorithm. This might be surprising since the IHT seems to reconstruct the tubelins with a correct thickness. However, this indicates that IHT reconstruct many molecules of low intensity which are not situated on the tubelins.

\begin{figure}[]
\centering

\includegraphics[width=.95\textwidth]{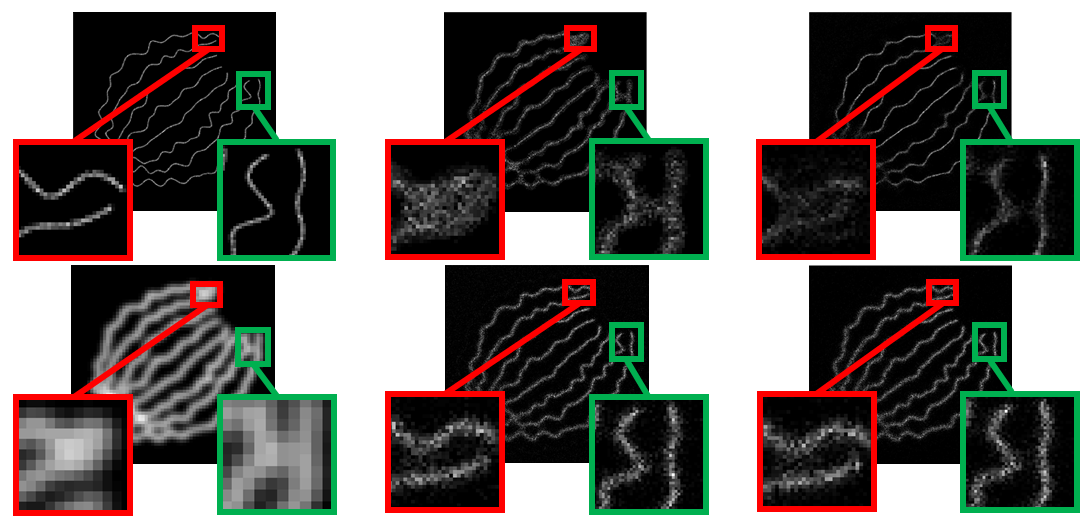}

\caption{Reconstructed of the simulated ISBI dataset, 220 molecules on average. Top: From left to right: Ground Truth, IHT Penalized and IHT Constrained.  Bottom: From left to right: Observed image, Biconvex Penalized and Biconvex Constrained}
\label{fig:xsim3}
\end{figure}

\begin{table}[]
\centering
\begin{tabular}{|l|l|l|l|l|}
\hline & \multicolumn{4}{l|}{Jaccard index (\%)}   \\ \hline
Method - Tolerance (nm) &  50 &  100 &  150 &  200 \\ \hline
IHT - Constrained                                           & \textbf{20.0}                     & 39.4                      & 48.9                      & 54.3                       \\ \hline
Biconvex - Constrained                         & \textbf{20.0}                 & \textbf{48.3 }    & \textbf{61.4}                       & \textbf{67.7}   \\ \hline

IHT - Penalized             &  13.1            &   31.2             &  35.7             & 38.0       \\ \hline
Biconvex - Penalized                           & 18.1                      & 40.0                       & 50.0                      & 54.7             \\ \hline
\end{tabular}
\caption{The Jaccard index with respect to the tolerance and the different algorithms. In bold is the best reconstruction. }\label{tab:sim3}
\end{table}

\subsection{Results of the real dataset}
We compare the algorithms on a high-density dataset of tubulins which are provided from the 2013 ISBI SMLM challenge, where there are 500 acquisitions. Each acquisition is of size $128\times 128$ pixels and each pixel is of size $100 \times 100$ nm$^2$. The FWHM has been previously estimated to be 351.8 nm \cite{chahid2014echantillonnage}. We localize the molecules on a $512\times512$ pixel image, where each pixel is of size $25 \times 25$ nm$^2$. 

In this section, we do not have any beforehand knowledge of the solution, and we set $k=140$ for the biconvex constrained algorithm.  For the biconvex penalized algorithm we set $\lambda=1200$. We choose $\rho= 1$ because of computational time. For the constrained IHT algorithm, we set the constraint $k=100$ and for the penalized we set $\lambda=0.25$. Figure \ref{fig:xsim2} presents the reconstruction. The results are coherent with the results from the simulated dataset. The IHT algorithms reconstruct not as well as the biconvex algorithms, with the penalized version much worse than the constrained version.  

\begin{figure}[]
\centering

\includegraphics[width=.9\textwidth]{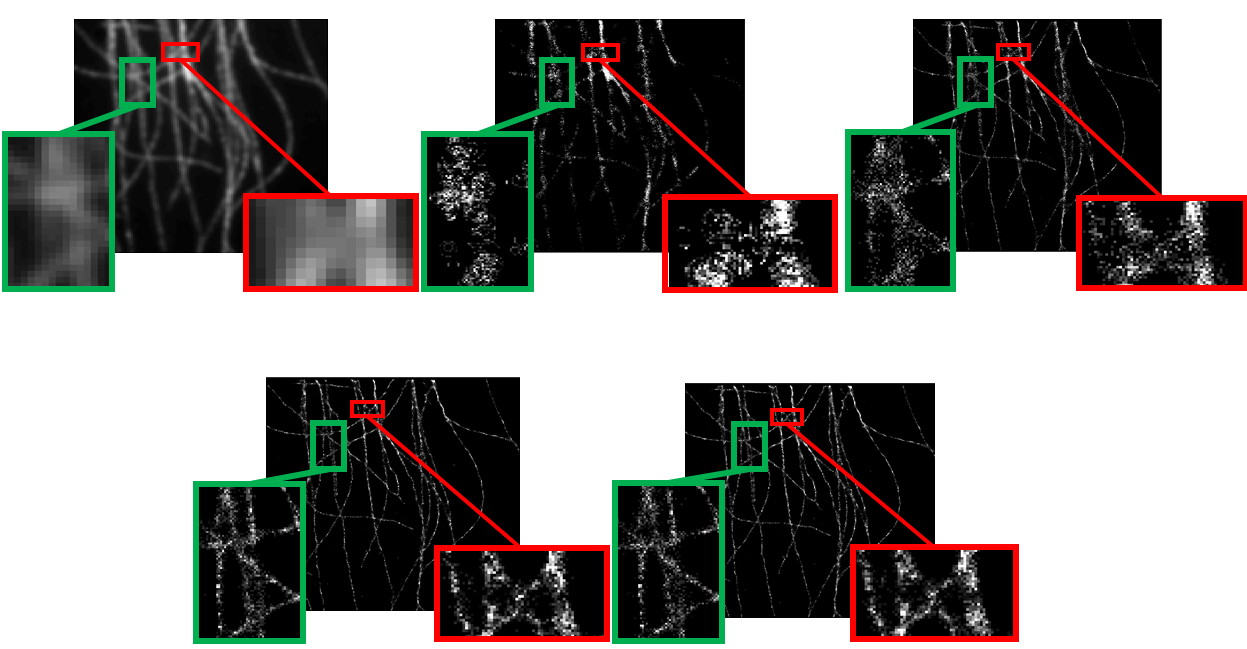}

\caption{ Real ISBI dataset. Top: From left to right: Observed Image, IHT Penalized and IHT Constrained.  Bottom: From left to right:  Biconvex Penalized and Biconvex Constrained}
\label{fig:xsim2}
\end{figure}

\section{Conclusion}
In this paper, we have presented a reformulation of the $\ell_2-\ell_0$ constrained and penalized problems. We have proved in Theorem \ref{theo:vikC} and Theorem \ref{theo:vikP} the exactness of the reformulations, that is, we can from a minimizer of the reformulation obtain a minimizer of the initial problem. Furthermore, both reformulations are biconvex. Using two central properties of the reformulation, we derive a general algorithm in order to minimize the constrained or the penalized reformulation. This algorithm is easy to implement as each step can be decomposed to well-studied problems.  The algorithms are compared to the well-known IHT algorithm on constrained and penalized form. We apply the algorithms to single-molecule localization microscopy and the two biconvex algorithms outperform the IHT algorithms visually and numerically. 

As perspectives, it seems interesting to further investigate the reformulation of the $\ell_0$-norm, and to introduce it with other data-fitting terms.

\section*{Appendix}
\addcontentsline{toc}{section}{Appendix}
\label{app:proof1}
In this Appendix we recall  and  prove some properties that are useful for the proof of Theorem \ref{theo:vikC} and Theorem \ref{theo:vikP}.

\begin{lemma}\label{def:semi}
Let $P\in \mathbb{R}^{N\times l}$ be a semi-orthogonal matrix, that is, a non-square matrix composed of orthonormal columns. Then, $P^TP$ is the identity matrix in $\mathbb{R}^{l\times l}$. 
\end{lemma}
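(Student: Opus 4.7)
The plan is to compute the entries of $P^TP$ directly from the definition of matrix multiplication and then invoke the orthonormality hypothesis. First I would denote the columns of $P$ by $p_1,\dots,p_l \in \mathbb{R}^N$, so that $P = [\,p_1 \mid \cdots \mid p_l\,]$. Since the rows of $P^T$ are precisely the transposed columns of $P$, the $(i,j)$ entry of the product $P^TP$ is $(P^TP)_{ij} = p_i^T p_j = \langle p_i, p_j\rangle$.

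Next, I would invoke the orthonormality assumption: by definition of a semi-orthogonal matrix, the columns satisfy $\langle p_i, p_j\rangle = \delta_{ij}$, where $\delta_{ij}$ is the Kronecker delta. Substituting into the expression for $(P^TP)_{ij}$ shows that $P^TP$ has ones on the diagonal and zeros off the diagonal, which is exactly the $l\times l$ identity matrix. No convergence or limiting arguments are needed; the result follows in one line once the entries are expanded.

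There is no real obstacle, since the statement is essentially a restatement of what it means for the columns of $P$ to be orthonormal. The only subtlety worth flagging is that the companion identity $PP^T = I_N$ does \emph{not} hold in general when $P$ is non-square: $PP^T$ is merely the orthogonal projector onto the column span of $P$, an $N\times N$ matrix of rank $l < N$. What makes $P^TP = I_l$ work is that we are forming the Gram matrix of the $l$ orthonormal vectors, whose natural size $l\times l$ matches the dimension of the identity on the right-hand side.
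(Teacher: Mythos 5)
Your proof is correct and follows essentially the same route as the paper's: both expand the entries of $P^TP$ as the pairwise inner products $\langle p_i,p_j\rangle$ of the columns and conclude from orthonormality that the Gram matrix is $I_l$. The remark that $PP^T \neq I_N$ in general is a nice clarification but not needed for the statement.
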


\begin{lemma}\label{lem:normA}
Let $A\in \mathbb{R}^{M\times N}$, let $a_i$ denote the $i$th column of $A$.  Defining $\omega$ to be a set of indices, $\omega\subseteq \{1,\dots ,N\}$. Let the restriction of $A$ to the columns indexed by the elements of $\omega$ be denoted as $A_\omega=(a_{\omega[1]},\dots,a_{\omega[\#\omega]})\in \mathbb{R}^{M\times \#\omega}$.  Then $\|A_\omega\|\leq \|A\|$.
\end{lemma}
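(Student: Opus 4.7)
The plan is to reduce this to a straightforward submultiplicativity argument using the semi-orthogonal selection matrix, which explains why Lemma~\ref{def:semi} was stated just beforehand. Specifically, I would introduce the column-selection matrix $P\in\mathbb{R}^{N\times\#\omega}$ defined by $P=(e_{\omega[1]},\dots,e_{\omega[\#\omega]})$, where $e_j$ is the $j$-th canonical basis vector of $\mathbb{R}^N$. The columns of $P$ are distinct standard basis vectors, so they are orthonormal, and thus $P$ is semi-orthogonal in the sense of Lemma~\ref{def:semi}, giving $P^TP = I_{\#\omega}$.

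Next, I would verify the identity $A_\omega = AP$ by a direct column-by-column check: the $i$-th column of $AP$ is $A e_{\omega[i]} = a_{\omega[i]}$, which matches the definition of $A_\omega$. From $P^TP=I$, the spectral norm of $P$ is $\|P\| = \sqrt{\sigma_{\max}(P^TP)} = 1$.

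Then, by submultiplicativity of the spectral norm,
\begin{equation*}
\|A_\omega\| \;=\; \|AP\| \;\leq\; \|A\|\,\|P\| \;=\; \|A\|,
\end{equation*}
which is the desired inequality. Alternatively, if one prefers to avoid invoking submultiplicativity as a named fact, the same conclusion follows directly from the variational characterization: for any $v\in\mathbb{R}^{\#\omega}$ with $\|v\|_2=1$, the vector $\tilde v = Pv\in\mathbb{R}^N$ satisfies $\|\tilde v\|_2 = \|v\|_2 = 1$ (since $\|Pv\|_2^2 = v^T P^T P v = \|v\|_2^2$) and $A_\omega v = A\tilde v$, so $\|A_\omega v\|_2 \leq \|A\|$ and the supremum yields $\|A_\omega\|\leq \|A\|$.

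There is no real obstacle here; the only subtlety is making sure the indexing of the selection matrix is stated carefully enough that $A_\omega = AP$ is unambiguous, and noting that $P$ fits the hypothesis of Lemma~\ref{def:semi} (non-square when $\#\omega < N$, and equal to a permutation matrix when $\#\omega = N$, a case which is also trivial). I would write the proof in the AP form since it leverages the previously stated lemma and is the shortest route.
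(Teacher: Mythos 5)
Your proposal is correct and follows essentially the same route as the paper: both write $A_\omega = AP$ for the semi-orthogonal column-selection matrix $P$, use $P^TP = I$ (Lemma~\ref{def:semi}) to get $\|P\|=1$, and conclude by submultiplicativity of the spectral norm. Your added variational argument and the careful column-by-column check of $A_\omega = AP$ are harmless extras; in fact your statement that the columns of $P$ are canonical basis vectors of $\mathbb{R}^N$ corrects a small slip in the paper, which writes $e_i\in\mathbb{R}^M$.
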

\begin{proof}
Note that we can write $A_\omega$ as the product of matrix $A$ and a matrix $P$. We define the vector $e_i\in \mathbb{R}^{M}$, the unitary vector which has zeros everywhere except for the $i^th$ place. The matrix $P \in \mathbb{R}^{N \times \# \omega}$ can be constructed with $e_i\, \forall i \in \omega$. The matrix $P$ is therefore a semi-orthonormal matrix. The spectral norm of the matrix $P$ is 1, as $P^TP$ is the identity matrix (from Lemma \ref{def:semi}). The norm $A_\omega$ can  be written as
\begin{equation}
    \|A_\omega\|=\|AP\|\leq \|A\|\|P\|=\|A\|
\end{equation}
\qed\end{proof}

\begin{lemma}{ [Pshenichnyi-Rockafellar lemma]\cite[Theorem 2.9.1]{zalinescu2002convex} }
Assume $g$ is a  proper lower semi-continuous convex function. Let $C$ be a convex set, such that $int (C) \cap dom (g)\neq \emptyset$. Then
$$
\hat{x}=\argmin_{x\in C} g(x)\Leftrightarrow\textbf{0}\in \partial g(\hat{x})+N_C(\hat{x})
$$
where $N_C$ is the normal cone of the convex set $C$. 
\end{lemma}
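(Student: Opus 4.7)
The plan is to reduce the constrained minimization to an unconstrained one by absorbing $C$ into the objective via its indicator function. Since $\iota_C(x)=0$ on $C$ and $+\infty$ elsewhere, the equivalence $\hat{x}\in\argmin_{x\in C} g(x)\Longleftrightarrow\hat{x}\in\argmin_{x\in\mathbb{R}^n}(g+\iota_C)(x)$ is immediate. Both $g$ and $\iota_C$ are proper lower semi-continuous convex functions, so the sum $g+\iota_C$ is proper (its domain is $C\cap\mathrm{dom}(g)$, which is non-empty because $\mathrm{int}(C)\cap\mathrm{dom}(g)\neq\emptyset$) and convex. The Fermat rule for convex functions, $\hat{x}\in\argmin(g+\iota_C)\Longleftrightarrow 0\in\partial(g+\iota_C)(\hat{x})$, follows in one line from the subgradient inequality applied at $\hat{x}$.

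The heart of the proof is then the Moreau–Rockafellar sum rule: under the qualification $\mathrm{int}(C)\cap\mathrm{dom}(g)\neq\emptyset$,
$$
\partial(g+\iota_C)(\hat{x}) \;=\; \partial g(\hat{x})+\partial\iota_C(\hat{x}).
$$
The inclusion $\supseteq$ is elementary: summing the two subgradient inequalities for $v_1\in\partial g(\hat{x})$ and $v_2\in\partial\iota_C(\hat{x})$ yields $v_1+v_2\in\partial(g+\iota_C)(\hat{x})$. The reverse inclusion $\subseteq$ is the non-trivial content, and is precisely where the interior hypothesis is used. I would prove it by the standard Hahn–Banach separation argument applied to the epigraphs of $g-g(\hat{x})-\langle v,\cdot-\hat{x}\rangle$ and of $\iota_C$: the interior condition ensures these convex sets can be properly separated, and the slope of the separating hyperplane provides the decomposition $v=v_1+v_2$ with $v_1\in\partial g(\hat{x})$ and $v_2\in\partial\iota_C(\hat{x})$.

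It remains to identify $\partial\iota_C(\hat{x})$ with $N_C(\hat{x})$. By definition, $v\in\partial\iota_C(\hat{x})$ means $\iota_C(x)\geq\iota_C(\hat{x})+\langle v,x-\hat{x}\rangle$ for all $x$. Since $\hat{x}\in C$ gives $\iota_C(\hat{x})=0$, and the inequality is vacuous for $x\notin C$, this reduces to $\langle v,x-\hat{x}\rangle\leq 0$ for all $x\in C$, which is exactly the definition of $N_C(\hat{x})$ given in the Notations section. Chaining the three equivalences delivers the claim.

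The main obstacle is the non-trivial inclusion in the sum rule; the Fermat rule and the indicator-to-normal-cone identification are essentially definitional. The interior qualification cannot be dropped in general, as without it the sum rule can fail (one can only assert $\partial g(\hat{x})+\partial\iota_C(\hat{x})\subseteq\partial(g+\iota_C)(\hat{x})$), so the separation argument is what gives the lemma its sharpness.
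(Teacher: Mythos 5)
The paper does not prove this lemma at all---it is quoted as Theorem 2.9.1 of Zalinescu's book---so there is no in-paper argument to compare yours against; what follows assesses your outline on its own merits. Your chain of reductions is the standard textbook proof and it is correct: the passage from constrained minimization to $g+\iota_C$, the Fermat rule for convex functions, the identification $\partial\iota_C(\hat{x})=N_C(\hat{x})$ (which matches the normal-cone definition in the paper's Notations), and the Moreau--Rockafellar sum rule under the qualification $\mathrm{int}(C)\cap\mathrm{dom}(g)\neq\emptyset$, which is precisely the condition ensuring $\iota_C$ is continuous at some point of $\mathrm{dom}(g)$. Two small remarks. First, you assert that $\iota_C$ is lower semi-continuous; that would require $C$ to be closed, which the statement does not assume---but nothing in your argument actually uses it, since the sum rule needs only properness, convexity and the continuity qualification. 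Second, the one non-definitional step, the inclusion $\partial(g+\iota_C)(\hat{x})\subseteq\partial g(\hat{x})+N_C(\hat{x})$, is described rather than executed: to complete it you must check that the separating hyperplane obtained from the two epigraph-type convex sets is non-vertical (this is exactly where the interior hypothesis is consumed) before the slope can be split into $v_1\in\partial g(\hat{x})$ and $v_2\in N_C(\hat{x})$. For a result the paper itself treats as a citation, that level of detail is defensible, but as a self-contained proof the separation step is the piece that still has to be written out.
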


\begin{lemma}\label{lem:bound2}
Given the problem 
\begin{equation} \label{eq:bound2}
    \argmin_{x} \frac{1}{2}\|Ax-d\|^2 +<w,|x|>
\end{equation}
where $A$ $\mathbb{R}^{M \times N}$ is a full rank matrix and  $w$ a non-negative vector. $|x|$ is a vector which contains the absolute value of each component of $x$. Let $\hat{x}$  be a solution of  problem (\ref{eq:bound2}). 
Then $\|A\hat{x}-d\|_2$ is bounded independently of $w$ and 
\begin{equation}\label{eq:Resbound2}
    \|A\hat{x}-d\|\leq \|d\|
\end{equation}
\end{lemma}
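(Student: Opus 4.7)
The plan is to exploit the optimality of $\hat{x}$ against a single, cheap competitor: the point $x=0$. Since $\hat{x}$ is a minimizer of the objective in (\ref{eq:bound2}), we have
\begin{equation*}
\frac{1}{2}\|A\hat{x}-d\|^2 + \langle w, |\hat{x}|\rangle \;\leq\; \frac{1}{2}\|A\cdot 0 - d\|^2 + \langle w, |0|\rangle \;=\; \frac{1}{2}\|d\|^2.
\end{equation*}
This is the only nontrivial inequality I need; no subgradient computation or use of full rank is required for this particular bound (full rank will matter elsewhere in the paper, but not here).

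Next I would invoke the sign hypotheses. The penalty term $\langle w, |\hat{x}|\rangle$ is a sum of products $w_i |\hat{x}_i|$; because $w$ is a non-negative vector and each $|\hat{x}_i|\geq 0$, the whole inner product satisfies $\langle w, |\hat{x}|\rangle \geq 0$. Dropping this non-negative term from the left-hand side of the previous display yields
\begin{equation*}
\frac{1}{2}\|A\hat{x}-d\|^2 \;\leq\; \frac{1}{2}\|d\|^2,
\end{equation*}
and taking square roots gives exactly the desired bound $\|A\hat{x}-d\|\leq \|d\|$. The bound is clearly independent of $w$, since $w$ was discarded.

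There is essentially no obstacle: the main (and only) subtlety is to notice that the competitor $x=0$ is admissible and that both parts of the objective are bounded below at this point by known quantities. If one instead tried to analyze optimality conditions via $\partial(\langle w, |\cdot|\rangle)$ and $A^T(A\hat{x}-d)$, one would get a more detailed characterization of $\hat{x}$, but for the stated a priori residual bound this is overkill; the zero-competitor argument suffices and is what I would present.
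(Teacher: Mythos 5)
Your proof is correct and follows exactly the same route as the paper's: compare $\hat{x}$ against the competitor $x=0$, then drop the non-negative term $\langle w,|\hat{x}|\rangle$ to obtain $\frac{1}{2}\|A\hat{x}-d\|^2\leq\frac{1}{2}\|d\|^2$. Your side remark that full rank is not needed for this particular bound is also accurate.
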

\begin{proof}
Let $\hat{x}$ be the solution of $\argmin_{x} \frac{1}{2}\|Ax-d\|^2 +<w,|x|>$, then we have $\forall x\in \mathbb{R}^N$
\begin{equation}
    \frac{1}{2}\|A\hat{x}-d\|^2+<w,|\hat{x}|>\leq\frac{1}{2}\|Ax-d\|^2+<w,|x|>.
\end{equation}
In particular, by choosing $x=0$ we have:  

\begin{align}
    \frac{1}{2}\|A\hat{x}-d\|^2+<w,|\hat{x}|>\leq      \frac{1}{2}\|d\|^2.
\end{align}

The term $<w,|\hat{x}|>$ is always non-negative as $w$ is a non-negative vector, therefore we have

$$
\frac{1}{2}\|A\hat{x}-d\|^2\leq \frac{1}{2}\|d\|^2
$$
and so
$$
\|A\hat{x}-d\|\leq \|d\|.
$$

\qed \end{proof}

\begin{lemma}\label{lem:wis}
Let $f(x)=\frac{1}{2}\|Ax-d\|_2^2+<w,|x|>+\iota_{\cdot\geq 0}(x)$, $A$ be a full rank matrix and $w$ is a non-negative vector. We have the following result: If $w_i> \sigma(A)\|d\|_2$ then the optimal solution of the following optimization problem:
\begin{equation}
 \hat{x}=\argmin_{x} f(x) 
 \label{eq:lemme3init}
\end{equation}

is achieved with $\hat{x}_i=0$.
\end{lemma}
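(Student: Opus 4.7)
The plan is to combine the first-order optimality conditions for the convex problem \eqref{eq:lemme3init} with the a priori residual bound of Lemma \ref{lem:bound2}, and to argue by contradiction: assuming $\hat{x}_i>0$ should force $w_i \le \sigma(A)\|d\|_2$.

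First, I would observe that on the feasible set $\{x\ge 0\}$ we have $|x|=x$, so the minimization \eqref{eq:lemme3init} is equivalent to the convex program
\begin{equation*}
\min_{x\in\mathbb{R}^N}\; \tfrac{1}{2}\|Ax-d\|_2^2 + \langle w,x\rangle + \iota_{\cdot\ge 0}(x).
\end{equation*}
Applying the Pshenichnyi--Rockafellar lemma (which is available since the data term is smooth convex and $\mathrm{int}(\{x\ge 0\})\cap\mathrm{dom}(\langle w,\cdot\rangle)\ne\emptyset$), $\hat{x}$ is a minimizer if and only if
\begin{equation*}
0 \in A^T(A\hat{x}-d) + w + N_{\{x\ge 0\}}(\hat{x}).
\end{equation*}

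Next, I would argue coordinate-wise by contradiction. Suppose $\hat{x}_i>0$. Then the $i$-th component of the normal cone is $\{0\}$, so the optimality condition reduces to $(A^T(A\hat{x}-d))_i + w_i = 0$, i.e.\ $(A^T(A\hat{x}-d))_i = -w_i$, and in particular $|(A^T(A\hat{x}-d))_i| = w_i$.

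Finally, I would bound this quantity uniformly in $w$. Using the elementary estimates
\begin{equation*}
|(A^T(A\hat{x}-d))_i| \;\le\; \|A^T(A\hat{x}-d)\|_\infty \;\le\; \|A^T(A\hat{x}-d)\|_2 \;\le\; \sigma(A)\,\|A\hat{x}-d\|_2,
\end{equation*}
together with Lemma \ref{lem:bound2} which provides $\|A\hat{x}-d\|_2 \le \|d\|_2$, I obtain $w_i \le \sigma(A)\|d\|_2$. This contradicts the hypothesis $w_i>\sigma(A)\|d\|_2$, so necessarily $\hat{x}_i=0$.

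The only subtle point is to handle the nondifferentiability of $\langle w,|\cdot|\rangle$ at the origin correctly; this is precisely what the positivity constraint takes care of, since it collapses $|x|$ to $x$ on the feasible region and removes the multivaluedness of $\partial|\cdot|$ at $0$. Once this simplification is made, the argument is just optimality plus the residual bound from Lemma \ref{lem:bound2}, so the full-rank assumption on $A$ is used only via the spectral bound $\|A^T\|=\sigma(A)$.
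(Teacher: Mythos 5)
Your proposal is correct and follows essentially the same route as the paper's proof: Pshenichnyi--Rockafellar optimality conditions on the nonnegative orthant, the chain of bounds $|(A^T(A\hat{x}-d))_i| \le \sigma(A)\|A\hat{x}-d\|_2 \le \sigma(A)\|d\|_2$ via Lemma \ref{lem:bound2}, and the conclusion that $\hat{x}_i>0$ is incompatible with $w_i>\sigma(A)\|d\|_2$. The only cosmetic difference is that you replace $\langle w,|x|\rangle$ by $\langle w,x\rangle$ on the feasible set before differentiating, whereas the paper keeps the set-valued subdifferential of $\langle w,|\cdot|\rangle$; both are valid.
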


\begin{proof}
We start by proving that $  \sigma(A)\|d\|_2 \geq \left|\left(A^T(A\hat{x}-d)\right)_{i}\right| $. Remark that Lemma \ref{lem:bound2} is valid for problem (\ref{eq:lemme3init}), from which we have 
\begin{align*}
    \sigma(A)\|d\|_2 & \geq  \sigma(A)\|A\hat{x}-d\|_2 \\ 
    &\geq \|A^T\| \|A\hat{x}-d\|_2 \\
    &\geq \|A^T(A\hat{x}-d)\|_2 \\
    &\geq \|A^T(A\hat{x}-d)\|_\infty \\
    &\geq |\left(A^T(A\hat{x}-d)\right)_i| \quad \forall i\in \{1,\dots,N\}
    \end{align*}
    
Then, by choosing, for all $i\in [1..N]$,  $w_{i}>\sigma(A)\|d\|_2$, we are sure that $w_{i}> \left|\left(A^T(A\hat{x}-d)\right)_{i}\right|$.
From the Pshenichnyi-Rockafellar lemma, a necessary and sufficient condition for $\hat{x}$ is a minimizer of $f$ on  $C$ is that 
$$
\textbf{0}\in \partial f(\hat{x})+N_C(\hat{x})
$$
where in our case $C$ is the $\mathbb{R}^N_+$  and $f(x)=\frac{1}{2}\|Ax-d\|^2+<w,|x|>$. We have that $\partial f(x)=\partial(\frac{1}{2}\|Ax-d\|^2) + \partial(<w,|x|>)$ since $f(x)$ is a sum of two convex functions, where the intersection of the domains is non empty (see \cite[Corollary 16.38]{bookConvex}). \\

The optimal condition is therefore
$$
\textbf{0}\in A^T(A\hat{x}-d)+\partial <w,|\hat{x}|> + N_{\mathbb{R}^d_+}(\hat{x})
$$
where 
$$
(\partial <w,|\hat{x}|>)_i \begin{cases}
=w_i \text{ if } \hat{x}_i>0\\
=-w_i \text{ if } \hat{x}_i<0\\
\in [-w_i,w_i] \text{ if } \hat{x}_i=0
\end{cases}
$$
and 
$$
 (N_{\mathbb{R}^d_+}(\hat{x}))_i\begin{cases}
 =0 \text{ if } \hat{x}_i>0\\
 \in ]-\infty,0] \text{ if } \hat{x}_i=0
 \end{cases}
$$
For $\hat{x}_{i}$ we have the following optimal condition
$$
-A^T(A\hat{x}-d)_{i} \begin{cases}
=w_{i} \text{ if } \hat{x}_{i}>0\\
\in [-w_{i},w_{i}] + \,\, ]-\infty,0] \text{ if } \hat{x}_{i}=0
\end{cases}
$$
If $w_i>\sigma(A)\|d\|_2$, then  $|A^T(A\hat{x}-d)_{i}|<w_{i}$ and $\hat{x}_{i}$ cannot be strictly positive, furthermore $\hat{x}_{i}$ cannot be strictly negative since we work in the non-negative space. Therefore  $\hat{x}_{i}=0$.\\
\qed \end{proof}

\begin{lemma}\label{lem:uconst}
Let $(x_\rho,u_\rho)$ be a local minimizer of $G_\rho$ defined in (\ref{eq:Grho}), with $I$ on the constrained form, that is,  defined as in (\ref{eq:a}). Let $G_{x_{\rho}}(u)= \frac{1}{2}\|Ax_{\rho}-d\|^2+ I(u) +\rho(\|x_{\rho}\|_1-<x_{\rho},u>)$.
We denote $O$ as the indexes of the k largest values of  $\{i=1...N,|(x_{\rho})_i|\}$. $Q\triangleq\{i|(x_{\rho})_i>0\}$, and $S\triangleq\{j|(x_{\rho})_j<0\}$. Moreover, we define $D\triangleq O \cap Q$, $L\triangleq O\cap S$ and $W\triangleq\{1,2...,N\}\backslash\{D\cup L\}$. If $\#(D\cup L)=k$, that is, $\|x_\rho\|_0\geq k$, then the minimum of $G_{x_\rho}(u)$ will be reached with $u_{\rho}$ such that 
\begin{equation}\label{eq:hatu}
  (u_{\rho})_i\begin{cases}
=1 \text{ if } i\in D\\
=-1 \text{ if } i\in L\\
=0 \text{ if } i\in W\\
\end{cases}  
\end{equation}
If $\#(D\cup L)<k$, that is, $\|x_\rho\|_0<k$, then 
\begin{equation}\label{eq:hatu}
  (u_{\rho})_i\begin{cases}
=1 \text{ if } i\in D\\
=-1 \text{ if } i\in L\\
\in [-1,1]\text{ if } i\in W\\
\end{cases}  
\end{equation}
such that $\sum_{i\in W} |u_i|\leq k-\#(D\cup L)$.
\end{lemma}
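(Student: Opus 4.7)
The plan is to fix $x_\rho$ and treat the lemma as a statement about the inner minimization $\min_u G_{x_\rho}(u)$, which is a convex (in fact linear, modulo the feasibility set) problem that can be solved explicitly by a sign-matching plus greedy argument. Note that because $(x_\rho,u_\rho)$ is a local minimizer of the joint functional $G_\rho$ and $G_\rho(x_\rho,\cdot)$ is convex, $u_\rho$ must be a global minimizer of $G_{x_\rho}$, so it suffices to describe the argmin of $G_{x_\rho}$.

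First I would drop the $u$-independent pieces $\tfrac12\|Ax_\rho-d\|^2$ and $\rho\|x_\rho\|_1$ from $G_{x_\rho}(u)$, so that minimizing $G_{x_\rho}(u)$ is equivalent to
\[
\max_{u}\ \langle x_\rho,u\rangle \quad\text{subject to}\quad \|u\|_\infty\le 1,\ \|u\|_1\le k,
\]
using that $\rho>0$ and the shape of $I$ in \eqref{eq:a}. Next I would invoke the sign-matching inequality
\[
\langle x_\rho,u\rangle \;\le\; \sum_{i=1}^{N} |(x_\rho)_i|\,|u_i|,
\]
which is saturated iff $\mathrm{sign}(u_i)=\mathrm{sign}((x_\rho)_i)$ whenever $(x_\rho)_i\neq 0$. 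Setting $v_i:=|u_i|$, the problem becomes
\[
\max_{v}\ \sum_i |(x_\rho)_i|\,v_i\quad\text{s.t.}\quad v\in[0,1]^N,\ \sum_i v_i\le k,
\]
whose optimum, by a standard rearrangement/greedy argument, is $\sum_{i\in O}|(x_\rho)_i|$ and is attained by putting mass $v_i=1$ on the top-$k$ magnitude set $O$.

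I would then split into the two cases of the lemma. If $\|x_\rho\|_0\ge k$, every index of $O$ has $(x_\rho)_i\neq 0$, so $O=D\cup L$; saturating the upper bound while keeping $\|u\|_1\le k$ forces $v_i=1$ on $O$ and $v_i=0$ on $W$, and sign matching gives $u_i=1$ on $D$, $u_i=-1$ on $L$, $u_i=0$ on $W$, which is exactly the first displayed formula. If $\|x_\rho\|_0<k$, then $D\cup L$ is precisely the support of $x_\rho$ and $W$ is its zero set; sign matching again forces $u_i=\pm 1$ on $D\cup L$, but on $W$ the coefficients $(x_\rho)_i$ vanish, so $\langle x_\rho,u\rangle$ is insensitive to the choice of $u_i\in[-1,1]$ for $i\in W$, subject only to the remaining budget $\sum_{i\in W}|u_i|\le k-\#(D\cup L)$, giving the second displayed formula.

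The only subtlety, and what I would flag as the main obstacle, is that the greedy step may have non-unique maximizers when there are ties in $|(x_\rho)_i|$ at the boundary of the top-$k$ set; however, the lemma is phrased as an existence statement (\emph{``the minimum will be reached with $u_\rho$ such that\dots''}) and the set $O$ is fixed a priori, so it is enough to check that the proposed $u_\rho$ is feasible and attains the common upper bound derived above, which both follow immediately from the sign-matching step together with the greedy choice.
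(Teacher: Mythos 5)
Your proposal is correct and follows essentially the same route as the paper: the paper's proof consists precisely of the observation that minimizing $G_{x_\rho}(u)$ reduces to minimizing $-\langle x_\rho,u\rangle + \iota_{-1\le\cdot\le 1}(u) + \iota_{\|\cdot\|_1\le k}(u)$, after which it declares the result obvious. Your sign-matching and greedy argument simply supplies the details the paper omits, and your remark that the statement is only an existence claim (so ties in the top-$k$ magnitudes are harmless) is the right way to read it.
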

\begin{proof}
We observe that  minimizing $G_{x_\rho}(u)$ can be viewed as a problem of minimizing $-<x_{\rho},u> + \iota_{-1\leq\cdot\leq 1 }(u) + \iota_{\|\cdot\|_1\leq k}(u)$ by using the definition of $I(u)$. 
The results are obvious. 
\qed\end{proof}

\begin{lemma}\label{lem:upenal}
Let $(x_\rho,u_\rho)$ be a local minimizer of $G_\rho$ defined in (\ref{eq:Grho}), with $I$ on the penalized form, that is,  defined as in (\ref{eq:b}). Let $G_{x_{\rho}}(u)= \frac{1}{2}\|Ax_{\rho}-d\|^2+ I(u) +\rho(\|x_{\rho}\|_1-<x_{\rho},u>)$.
The minimum of $G_{x_\rho}(u)$ will be reached with a $u_{\rho}$ such that
\begin{equation}\label{eq:hatu}
  (u_{\rho})_i\begin{cases}
=1 \text{ iff } (x_{\rho})_i\in [\frac{\lambda}{\rho}, + \infty[\\
=0 \text{ iff } (x_{\rho})_i\in \frac{\lambda}{\rho}[-1,1]\\
=-1 \text{ iff } (x_{\rho})_i\in ]-\infty,-\frac{\lambda}{\rho}]\\
\in ]0,1[ \text{ iff } (x_{\rho})_i =\frac{\lambda}{\rho} \\
\in ]-1,0[ \text{ iff } (x_{\rho})_i =-\frac{\lambda}{\rho}
\end{cases}  
\end{equation}
\end{lemma}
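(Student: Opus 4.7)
The plan is to exploit biconvexity: since $G_\rho$ is convex in $u$ when $x$ is held fixed, if $(x_\rho, u_\rho)$ is a local minimizer of $G_\rho$ then $u_\rho$ must be a global minimizer of the partial map $u \mapsto G_{x_\rho}(u)$. Dropping the constants (in $u$) $\tfrac{1}{2}\|Ax_\rho-d\|^2$ and $\rho\|x_\rho\|_1$, and using the definition of $I(u)$ in the penalized form, this reduces to
\begin{equation*}
u_\rho \in \argmin_{-\mathbf{1}\leq u\leq \mathbf{1}}\, \bigl\{\lambda\|u\|_1 - \rho \langle x_\rho, u\rangle\bigr\}.
\end{equation*}

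Both the objective and the box constraint are separable across coordinates, so the problem decouples into $N$ independent scalar problems, one for each $i\in\{1,\dots,N\}$:
\begin{equation*}
(u_\rho)_i \in \argmin_{v\in[-1,1]}\, \phi_i(v), \qquad \phi_i(v) := \lambda|v| - \rho (x_\rho)_i\, v.
\end{equation*}

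Each $\phi_i$ is a piecewise-affine convex function on $[-1,1]$ with slope $\lambda - \rho(x_\rho)_i$ on $(0,1]$ and slope $-\lambda - \rho(x_\rho)_i$ on $[-1,0)$. I would then perform a case analysis on the signs of these slopes (or, equivalently, apply the Pshenichnyi--Rockafellar optimality condition $0 \in \lambda\,\partial|\cdot|(v) - \rho(x_\rho)_i + N_{[-1,1]}(v)$, using $\partial|\cdot|(0)=[-1,1]$ and the standard normal cone at the endpoints of $[-1,1]$). If $(x_\rho)_i > \lambda/\rho$, both slopes are strictly negative, so $\phi_i$ is strictly decreasing on $[-1,1]$ and the unique minimizer is $v=1$; if $(x_\rho)_i < -\lambda/\rho$, a symmetric argument yields the unique minimizer $v=-1$; if $|(x_\rho)_i| < \lambda/\rho$, the two slopes have opposite sign pushing $v$ toward $0$, so the unique minimizer is $v=0$. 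At the transition values $(x_\rho)_i = \pm\lambda/\rho$ exactly one slope vanishes, so the minimizer set enlarges to $[0,1]$ (respectively $[-1,0]$), which contains the open intervals $(0,1)$ and $(-1,0)$ appearing in the last two branches of the statement.

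No real obstacle is expected: the content of the lemma is a direct reading of a separable one-dimensional piecewise-linear convex minimization, and the ``iff'' formulation in the statement follows because the case analysis identifies the \emph{full} minimizer set of each $\phi_i$ as a function of $(x_\rho)_i$; the only mild care needed is to observe that at the two transition points $(x_\rho)_i = \pm\lambda/\rho$ the branches of the case split overlap, as all of $v\in\{0\}\cup(0,1)\cup\{1\}$ (resp.\ $\{0\}\cup(-1,0)\cup\{-1\}$) are admissible values of $(u_\rho)_i$.
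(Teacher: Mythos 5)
Your proposal is correct and follows essentially the same route as the paper: the paper also reduces to the convex partial problem in $u$, writes the subgradient optimality condition $0 \in -\rho x_\rho + \partial(\lambda\|\cdot\|_1)(u_\rho) + N_{[-1,1]^N}(u_\rho)$, and performs the coordinate-wise case analysis in both directions. Your explicit remark that the minimizer set enlarges to a full interval at the transition values $(x_\rho)_i = \pm\lambda/\rho$ is exactly the content of the paper's ``sufficient condition'' step.
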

\begin{proof}
Proof of the necessary condition: \\
We start by writing the optimal conditions of $G_{x_\rho}(u)$. 
 \begin{align}\label{eq:upenal}
 \textbf{0}&\in -\rho x_{\rho} + N_{-1\leq \cdot \leq 1}(u_{\rho}) + \begin{cases}
\lambda \text{ if } (u_{\rho})_i>0\\
 -\lambda   \text{ if } (u_{\rho})_i<0\\
 [-\lambda ,\lambda ]  \text{ if } (u_{\rho})_i=0
 \end{cases} 
\end{align}

We split the study of (\ref{eq:upenal}) in five cases.
\begin{itemize}
    \item If $(u_{\rho})_i=1$
    $$
    0\in -\rho (x_{\rho})_i + N_{-1\leq \cdot \leq 1}((u_{\rho})_i)+ \lambda \Leftrightarrow  (x_{\rho})_i\in \frac{[0,\infty[+ \lambda}{\rho}
    $$
    Thus, $(u_{\rho})_i=1\Rightarrow (x_{\rho})_i\in [\frac{\lambda}{\rho},+\infty[$
    \item If $0<(u_{\rho})_i<1$
    $$
    0\in -\rho (x_{\rho})_i + N_{-1\leq \cdot \leq 1}((u_{\rho})_i)+ \lambda \Leftrightarrow (x_{\rho})_i= \frac{\lambda}{\rho}
    $$
    Thus $0<(u_{\rho})_i<1 \Rightarrow (x_{\rho})_i=  \frac{\lambda}{\rho}$
    \item If $(u_{\rho})_i=0$
    $$
    0\in -\rho (x_{\rho})_i + N_{-1\leq \cdot \leq 1}((u_{\rho})_i)+ [-\lambda,\lambda]
    \Leftrightarrow
    (x_{\rho})_i\in \frac{\lambda}{\rho}[-1,1]
    $$
    Thus $(u_{\rho})_i=0 \Rightarrow (x_{\rho})_i\in  \frac{\lambda}{\rho}[-1,1]$
    
    \item If $-1<(u_{\rho})_i<0$
    $$
    0\in -\rho (x_{\rho})_i + N_{-1\leq \cdot \leq 1}((u_{\rho})_i)- \lambda
\Leftrightarrow
    (x_{\rho})_i=- \frac{\lambda}{\rho}
    $$
    Thus $-1<(u_{\rho})_i<0\Rightarrow (x_{\rho})_i=  -{\lambda}{\rho}$
   \item If $(u_{\rho})_i=-1$
    $$
    0\in -\rho (x_{\rho})_i + N_{-1\leq \cdot \leq 1}((u_{\rho})_i)- \lambda
    \Leftrightarrow
    (x_{\rho})_i\in \frac{]-\infty,0]- \lambda}{\rho}
    $$
    Thus, $u_{\rho}=-1\Rightarrow (x_{\rho})_i\in ]-\infty, -\frac{\lambda}{\rho}]$
\end{itemize}

Proof of sufficient condition:\\
We can prove that the reverse statement is also true.  We can rewrite $(x_{\rho})_i=\frac{\beta}{\rho}$, for some $\beta \in \mathbb{R}$. We have then from the optimal conditions (\ref{eq:upenal}) that
\begin{align}\label{eq:upenal}
 \textbf{0}&\in -\rho \frac{\beta}{\rho} + N_{-1\leq \cdot \leq 1}(u_{\rho}) + \begin{cases}
\lambda \text{ if } (u_{\rho})_i>0\\
 -\lambda   \text{ if } (u_{\rho})_i<0\\
 [-\lambda ,\lambda ]  \text{ if } (u_{\rho})_i=0
 \end{cases} 
\end{align}

\begin{align}
    0&\in [-\beta + \lambda,+\infty[   \text{ if $(u_{\rho})_i=1$}\label{eq:cond1}\\
    0&\in -\beta + \lambda   \text{ if $0<(u_{\rho})_i<1$}\label{eq:cond2}\\
    0&\in [-\lambda-\beta, \lambda - \beta] \text{ if $(u_{\rho})_i=0$}\label{eq:cond3}\\
    0&\in -\beta - \lambda   \text{ if $-1<(u_{\rho})_i<0$}\label{eq:cond4}\\
    0&\in ]-\infty,-(\beta + \lambda  )]  \text{ if $(u_{\rho})_i=-1$}\label{eq:cond5}\\
\end{align}
Assuming $\beta>\lambda$, then only (\ref{eq:cond1}) is possible. If $\beta=\lambda$, then (\ref{eq:cond1}), (\ref{eq:cond2}) (\ref{eq:cond3}) are possible. If $0\leq \beta<\lambda$, then only (\ref{eq:cond3}) is possible.  If $-\lambda< \beta<0$, then only (\ref{eq:cond3}) is possible.  If $ \beta=-\lambda$, then (\ref{eq:cond3}), (\ref{eq:cond4}) and (\ref{eq:cond5}) are possible. If $\beta<-\lambda$, then only (\ref{eq:cond5}) is possible.

This finishes the proof. 

\qed\end{proof}

\end{document}